\newtheorem{theorem}{Theorem}
\newtheorem{lemma}[theorem]{Lemma}
\newtheorem{conjecture}[theorem]{Conjecture}
\newtheorem{proposition}[theorem]{Proposition}
\newtheorem{claim}{Claim}
\begin{document}
\onehalfspace

\title{Relating the independence number and the dissociation number}
\author{Felix Bock\and Johannes Pardey\and Lucia D. Penso\and Dieter Rautenbach}
\date{}

\maketitle
\vspace{-10mm}
\begin{center}
{\small 
Institute of Optimization and Operations Research, Ulm University, Ulm, Germany\\
\texttt{$\{$felix.bock,johannes.pardey,lucia.penso,dieter.rautenbach$\}$@uni-ulm.de}
}
\end{center}

\begin{abstract}
The independence number $\alpha(G)$ and 
the dissociation number ${\rm diss}(G)$
of a graph $G$ are the largest orders of induced subgraphs of $G$ 
of maximum degree at most $0$ and at most $1$, respectively. 
We consider possible improvements of the obvious inequality
$2\alpha(G)\geq {\rm diss}(G)$.
For connected cubic graphs $G$ distinct from $K_4$, 
we show $5\alpha(G)\geq 3{\rm diss}(G)$, and 
describe the rich and interesting structure of the extremal graphs in detail.
For bipartite graphs, and, more generally, triangle-free graphs,
we also obtain improvements.
For subcubic graphs though, the inequality cannot be improved in general,
and we characterize all extremal subcubic graphs.\\[3mm]
{\bf Keywords:} Dissociation set, independent set, cubic graph, triangle-free gaph
\end{abstract}

\section{Introduction}

We consider finite, simple, and undirected graphs, and use standard terminology.
A set $D$ of vertices of a graph $G$
is a {\it dissociation set} in $G$ if the subgraph $G[D]$ of $G$ induced by $D$
has maximum degree at most $1$, and
the {\it dissociation number ${\rm diss}(G)$} of $G$ 
is the maximum order of a dissociation set in $G$.
Clearly, dissociation sets and the dissociation number bear a close resemblance to
the well known independent sets and the independence number $\alpha(G)$ 
of a graph $G$.
Indeed, if $D$ is a dissociation set in a graph $G$, then every component of $G[D]$ has one or two vertices.
Hence, selecting one vertex in each component yields an independent set 
of order at least $|D|/2$, which implies
\begin{eqnarray}\label{e0-1}
\alpha(G)\geq \frac{1}{2}{\rm diss}(G)\mbox{ for every graph $G$.}
\end{eqnarray}
While (\ref{e0-1}) is trivial, 
it is NP-hard \cite{bopapera1} to recognize the extremal graphs for (\ref{e0-1}).
The dissociation number is algorithmically hard even when restricted, 
for instance, to subcubic bipartite graphs \cite{bocalo,ordofigowe,ya}.
Bounds \cite{bopapera2,brkakase,brjakaseta,goharasc},
fast exact algorithms \cite{kakasc}, 
(randomized) approximation algorithms \cite{kakasc,hobu}, 
fixed parameter tractability \cite{ts},
and the maximum number of maximum dissociation sets \cite{tulidu}
have been studied for this parameter or its dual, 
the {\it $3$-path (vertex) cover} number.

The starting point for the present paper were possible improvements of (\ref{e0-1}).
As it turns out, (sub)cubic graphs play a special role in this context,
and it is not difficult to construct arbitrarily large connected $r$-regular extremal graphs for (\ref{e0-1}) as soon as $r\geq 4$, cf.~Section \ref{section2}.
For cubic graphs though, 
our first main result is the following best possible improvement of (\ref{e0-1}).
Note that the connected cubic graph $K_4$ satisfies (\ref{e0-1}) with equality.

\begin{theorem}\label{theorem1}
If $G$ is a connected cubic graph of order at least $6$, then $\alpha(G)\geq \frac{3}{5}{\rm diss}(G)$.
\end{theorem}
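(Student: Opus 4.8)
The plan is to pick a convenient maximum dissociation set, dispose of an easy case with the obvious independent set, and then reduce the theorem to an upper bound on the number of edges induced by the complement of the chosen set. Concretely, among all maximum dissociation sets of $G$ I would fix one, $D$, for which $G[D]$ has the largest possible number of isolated vertices, and write $D=A\cup V(M)$, where $A$ (with $a:=|A|$) is the set of isolated vertices of $G[D]$ and $M=\{x_1y_1,\dots,x_by_b\}$ is the set of edges of $G[D]$, so $\mathrm{diss}(G)=|D|=a+2b$. Since $A\cup\{x_1,\dots,x_b\}$ is independent, $\alpha(G)\ge a+b$, and because $a+b\ge\tfrac35(a+2b)\iff b\le 2a$, the theorem holds whenever $b\le 2a$; so from now on I assume $b\ge 2a+1$.

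Next, write $R=V(G)\setminus D$ and $r=|R|$. Every vertex of $A$ has all three of its neighbours in $R$, and each of $x_1,y_1,\dots,x_b,y_b$ has exactly one neighbour in $D$ (its $M$-partner) and hence two in $R$, so there are exactly $3a+4b$ edges between $D$ and $R$, and summing degrees over $R$ gives $3r=2\,e(G[R])+3a+4b$. I would then observe that $G[R]$ has maximum degree at most $1$: a vertex $w\in R$ with two neighbours in $R$ has at most one neighbour in $D$, and that neighbour must be matched, say $y_i$ (otherwise $D\cup\{w\}$ would be a dissociation set); but then $(D\setminus\{y_i\})\cup\{w\}$ is a maximum dissociation set in which $w$ and the newly isolated $x_i$ are both isolated, contradicting the choice of $D$. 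Hence $G[R]$ is a disjoint union of isolated vertices and isolated edges, so deleting one endpoint of each of its $e(G[R])$ edges leaves an independent set, and therefore $\alpha(G)\ge r-e(G[R])=\tfrac13\big(3a+4b-e(G[R])\big)$ by the identity above. This last bound is at least $\tfrac35(a+2b)=\tfrac35\mathrm{diss}(G)$ exactly when $e(G[R])\le\tfrac{6a+2b}{5}$, so it remains to establish this inequality.

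To prove $e(G[R])\le\tfrac{6a+2b}{5}$ I would argue by contradiction and exploit maximality of $D$ through local switches. For an edge $uv$ of $G[R]$ both endpoints have exactly two neighbours in $D$, say $\{p,q\}=N_G(u)\cap D$ and $\{s,t\}=N_G(v)\cap D$; then $D'=\big(D\setminus(\{p,q\}\cup\{s,t\})\big)\cup\{u,v\}$ is again a dissociation set, of order $|D|-|\{p,q,s,t\}|+2$, so $|\{p,q,s,t\}|\ge 2$, and when $|\{p,q,s,t\}|=2$ one has $\{p,q\}=\{s,t\}$, the set $\{u,v,p,q\}$ induces $K_4$ or $K_4$ minus the edge $pq$ (the former forcing $G=K_4$, excluded since $n\ge 6$), while bookkeeping the isolated vertices of $D'$ shows that unless at least one of $p,q$ lies in $A$ the set $D'$ again contradicts the choice of $D$. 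I expect that chaining such switches, together with their degree-$1$-in-$R$ variants, against a discharging count of the $3a+4b$ edges between $R$ and $D$ — vertices of $V(M)$ can absorb at most two, and vertices of $A$ at most three, of the ends of edges of $G[R]$, and the forbidden patterns above lower these capacities where it matters — yields $e(G[R])\le\tfrac{6a+2b}{5}$ and finishes the proof.

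The main obstacle is precisely this last step. The edge-insertion switch is order-preserving only in the degenerate case $\{p,q\}=\{s,t\}$, so in the generic case $|\{p,q,s,t\}|\in\{3,4\}$ one must combine several switches and handle the interaction of $A$ and $M$ carefully; moreover a naive charging of edges of $G[R]$ to vertices of $D$ alone only gives $e(G[R])\le\tfrac{3a+4b}{4}$, which (since $b>2a$) is too weak, so the argument must use more of the global structure, essentially that too many edges inside $R$ would let one genuinely enlarge either the dissociation set or an independent set. This is exactly where the rich structure of the extremal graphs enters, and where the hypothesis $n\ge 6$ (that is, $G\ne K_4$) is indispensable: it rules out the dense local patterns — copies of $K_4$ and of $K_4$ minus an edge on $\{u,v\}$ together with two vertices of $D$ — on which the inequality would otherwise fail.
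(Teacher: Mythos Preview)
Your setup is exactly the paper's: the choice of $D$ maximizing the number of isolated vertices, the edge count $3a+4b$ across the cut, and the observation that $G[R]$ has maximum degree at most $1$ (this is the paper's Lemma~\ref{lemmaDiss}) are all there, and the lower bound $\alpha(G)\ge r-e(G[R])=\tfrac{3a+4b-e(G[R])}{3}$ is equivalent to the paper's $\alpha(G)\ge r+s$. The gap is the last step. You correctly diagnose that the switch $(D\setminus\{p,q,s,t\})\cup\{u,v\}$ only matches $|D|$ in the degenerate case $\{p,q\}=\{s,t\}$, and that crude charging gives only $e(G[R])\le\tfrac{3a+4b}{4}$; but you do not actually prove $e(G[R])\le\tfrac{6a+2b}{5}$, and the sketch (``chaining such switches \ldots\ against a discharging count'') is not an argument. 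In fact it is not even clear that this inequality holds for every cubic $G$ in the regime $b>2a$: it is equivalent to $5|R|\le 9a+8b$, and neither $|R|\le|D|$ nor the switches you describe come close to this when $b$ is large compared to $a$.

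The paper bypasses this completely with one global ingredient you never invoke: Brooks' theorem. Since $G$ is connected, cubic, and (by $n\ge 6$) not $K_4$, Brooks gives $\alpha(G)\ge n/3$. From your identities $n=a+2b+r$ and $3r=2e(G[R])+3a+4b$ one gets $n=\tfrac{6a+10b+2e(G[R])}{3}$, hence
\[
\alpha(G)\ \ge\ \frac{6a+10b+2e(G[R])}{9}.
\]
Adding twice your bound $\alpha(G)\ge \tfrac{3a+4b-e(G[R])}{3}$ to this, the $e(G[R])$ terms cancel and you obtain
\[
15\,\alpha(G)\ \ge\ 12a+18b,\qquad\text{so}\qquad \alpha(G)\ \ge\ \frac{4a+6b}{5}\ \ge\ \frac{3}{5}(a+2b)\ =\ \frac{3}{5}\,\mathrm{diss}(G).
\]
No case split on $b\le 2a$ is needed, and the only place $n\ge 6$ enters is to make Brooks applicable. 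So the missing idea is simply: use $\alpha(G)\ge n/3$ as a second, \emph{increasing}-in-$e(G[R])$ lower bound for $\alpha(G)$ to cancel against your decreasing one, rather than trying to bound $e(G[R])$ on its own.
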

The extremal graphs for Theorem \ref{theorem1} have a very interesting structure, 
which we elucidate in Theorem \ref{conjecture1} 
at the end of Section \ref{section2}.
Their order is necessarily divisible by $18$, 
and Figure \ref{fig3} shows a smallest extremal graph.

\begin{figure}[H]
\begin{center}
\unitlength .4mm 
\linethickness{0.4pt}
\ifx\plotpoint\undefined\newsavebox{\plotpoint}\fi 
\begin{picture}(132,102)(0,0)
\put(0,80){\circle*{2}}
\put(130,80){\circle*{2}}
\put(0,20){\circle*{2}}
\put(130,20){\circle*{2}}
\put(20,80){\circle*{2}}
\put(110,80){\circle*{2}}
\put(30,40){\circle*{2}}
\put(100,40){\circle*{2}}
\put(20,20){\circle*{2}}
\put(110,20){\circle*{2}}
\put(20,100){\circle*{2}}
\put(110,100){\circle*{2}}
\put(30,60){\circle*{2}}
\put(100,60){\circle*{2}}
\put(20,0){\circle*{2}}
\put(110,0){\circle*{2}}
\put(20,100){\line(0,-1){20}}
\put(110,100){\line(0,-1){20}}
\put(30,60){\line(0,-1){20}}
\put(100,60){\line(0,-1){20}}
\put(20,0){\line(0,1){20}}
\put(110,0){\line(0,1){20}}
\put(20,80){\line(-1,0){20}}
\put(110,80){\line(1,0){20}}
\put(20,20){\line(-1,0){20}}
\put(110,20){\line(1,0){20}}
\put(0,80){\line(1,1){20}}
\put(130,80){\line(-1,1){20}}
\put(0,20){\line(1,-1){20}}
\put(130,20){\line(-1,-1){20}}
\put(55,50){\circle*{2}}
\put(75,50){\circle*{2}}
\put(55,50){\line(-5,2){25}}
\put(75,50){\line(5,2){25}}
\put(55,50){\line(-5,-2){25}}
\put(75,50){\line(5,-2){25}}
\put(0,81){\line(0,-1){61}}
\put(130,81){\line(0,-1){61}}
\put(75,50){\line(-1,0){20}}
\put(110,0){\line(-1,0){90}}
\put(55,50){\circle{4}}
\put(75,50){\circle{4}}
\put(20,80){\circle{4}}
\put(20,100){\circle{4}}
\put(130,80){\circle{4}}
\put(110,80){\circle{4}}
\put(110,20){\circle{4}}
\put(110,0){\circle{4}}
\put(20,20){\circle{4}}
\put(0,20){\circle{4}}
\put(65,100){\vector(1,0){.176}}\put(20,100){\line(1,0){90}}
\put(130,50){\vector(0,-1){.176}}\put(130,80){\line(0,-1){60}}
\put(65,0){\vector(-1,0){.176}}\put(110,0){\line(-1,0){90}}
\put(0,50){\vector(0,1){.176}}\put(0,20){\line(0,1){60}}
\put(25,70){\vector(1,-2){.176}}\multiput(20,80)(.084033613,-.168067227){119}{\line(0,-1){.168067227}}
\put(105,70){\vector(-1,-2){.176}}\multiput(110,80)(-.084033613,-.168067227){119}{\line(0,-1){.168067227}}
\put(25,30){\vector(1,2){.176}}\multiput(20,20)(.084033613,.168067227){119}{\line(0,1){.168067227}}
\put(105,30){\vector(-1,2){.176}}\multiput(110,20)(-.084033613,.168067227){119}{\line(0,1){.168067227}}
\end{picture}
\end{center}
\caption{A cubic graph $G$ with 
$\alpha(G)=6$ and ${\rm diss}(G)=10$;
the encircled vertices indicate a maximum dissociation set.
The orientation of some of the edges of $G$ 
is explained in the context of Theorem \ref{conjecture1} below.}\label{fig3}
\end{figure}
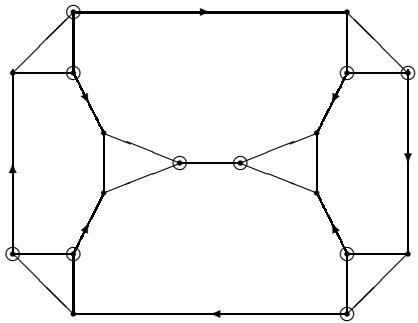

For subcubic graphs, the inequality (\ref{e0-1}) cannot be improved. 
Nevertheless, the subcubic extremal graphs have a simple structure,
and, as our second main result, we provide their constructive characterization:
Let the set ${\cal G}$ of connected graphs contain $K_4$ as well as all connected subcubic graphs that arise 
from the union of disjoint copies of the following three graphs, in which we mark certain vertices:
\begin{itemize}
\item $K_2$ with both vertices marked,
\item $K_3$ with two of the three vertices marked, and
\item the graph $K_4^*$ that arises from $K_4$ with vertices $a$, $b$, $c$, and $d$ 
by subdiviging the edge $ab$ twice, and
marking the two vertices created by these two subdivisions as well as the vertices $c$ and $d$,
\end{itemize}
by adding additional edges, each incident with at most one marked vertex.
For a graph $G$ in ${\cal G}\setminus \{ K_4\}$, let $D(G)$ be the set of all marked vertices.
See Figure \ref{fig2} for an illustration.

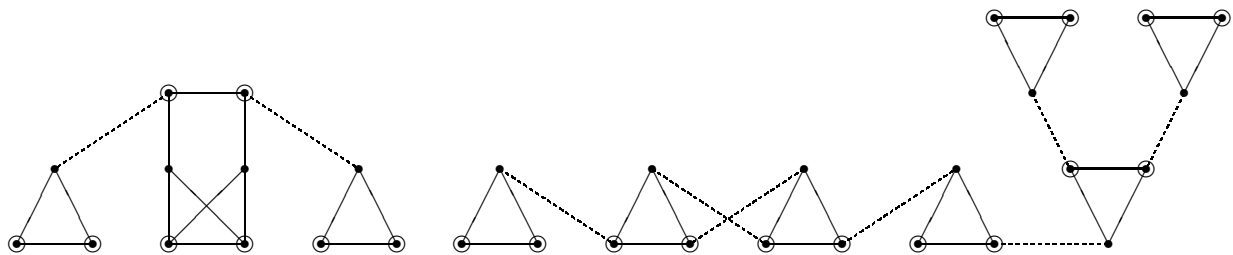
\begin{figure}[H]
\begin{center}
\unitlength 0.5mm 
\linethickness{0.4pt}
\ifx\plotpoint\undefined\newsavebox{\plotpoint}\fi 
\begin{picture}(112,52)(0,0)
\put(50,10){\circle*{2}}
\put(90,10){\circle*{2}}
\put(10,10){\circle*{2}}
\put(50,30){\circle*{2}}
\put(50,50){\circle*{2}}
\put(70,10){\circle*{2}}
\put(110,10){\circle*{2}}
\put(30,10){\circle*{2}}
\put(70,30){\circle*{2}}
\put(70,50){\circle*{2}}
\put(50,50){\line(1,0){20}}
\put(70,50){\line(0,-1){40}}
\put(70,10){\line(-1,1){20}}
\put(50,30){\line(0,-1){20}}
\put(50,10){\line(1,1){20}}
\put(70,10){\line(-1,0){20}}
\put(110,10){\line(-1,0){20}}
\put(30,10){\line(-1,0){20}}
\put(50,50){\line(0,-1){20}}
\put(70,50){\circle{4}}
\put(50,50){\circle{4}}
\put(50,10){\circle{4}}
\put(90,10){\circle{4}}
\put(10,10){\circle{4}}
\put(70,10){\circle{4}}
\put(110,10){\circle{4}}
\put(30,10){\circle{4}}
\put(100,30){\circle*{2}}
\put(20,30){\circle*{2}}
\put(110,10){\line(-1,2){10}}
\put(30,10){\line(-1,2){10}}
\put(100,30){\line(-1,-2){10}}
\put(20,30){\line(-1,-2){10}}
\multiput(69.93,49.93)(.0493421,-.0328947){16}{\line(1,0){.0493421}}
\multiput(71.509,48.877)(.0493421,-.0328947){16}{\line(1,0){.0493421}}
\multiput(73.088,47.824)(.0493421,-.0328947){16}{\line(1,0){.0493421}}
\multiput(74.667,46.772)(.0493421,-.0328947){16}{\line(1,0){.0493421}}
\multiput(76.245,45.719)(.0493421,-.0328947){16}{\line(1,0){.0493421}}
\multiput(77.824,44.667)(.0493421,-.0328947){16}{\line(1,0){.0493421}}
\multiput(79.403,43.614)(.0493421,-.0328947){16}{\line(1,0){.0493421}}
\multiput(80.982,42.561)(.0493421,-.0328947){16}{\line(1,0){.0493421}}
\multiput(82.561,41.509)(.0493421,-.0328947){16}{\line(1,0){.0493421}}
\multiput(84.14,40.456)(.0493421,-.0328947){16}{\line(1,0){.0493421}}
\multiput(85.719,39.403)(.0493421,-.0328947){16}{\line(1,0){.0493421}}
\multiput(87.298,38.351)(.0493421,-.0328947){16}{\line(1,0){.0493421}}
\multiput(88.877,37.298)(.0493421,-.0328947){16}{\line(1,0){.0493421}}
\multiput(90.456,36.245)(.0493421,-.0328947){16}{\line(1,0){.0493421}}
\multiput(92.035,35.193)(.0493421,-.0328947){16}{\line(1,0){.0493421}}
\multiput(93.614,34.14)(.0493421,-.0328947){16}{\line(1,0){.0493421}}
\multiput(95.193,33.088)(.0493421,-.0328947){16}{\line(1,0){.0493421}}
\multiput(96.772,32.035)(.0493421,-.0328947){16}{\line(1,0){.0493421}}
\multiput(98.351,30.982)(.0493421,-.0328947){16}{\line(1,0){.0493421}}
\multiput(49.93,49.93)(-.0493421,-.0328947){16}{\line(-1,0){.0493421}}
\multiput(48.351,48.877)(-.0493421,-.0328947){16}{\line(-1,0){.0493421}}
\multiput(46.772,47.824)(-.0493421,-.0328947){16}{\line(-1,0){.0493421}}
\multiput(45.193,46.772)(-.0493421,-.0328947){16}{\line(-1,0){.0493421}}
\multiput(43.614,45.719)(-.0493421,-.0328947){16}{\line(-1,0){.0493421}}
\multiput(42.035,44.667)(-.0493421,-.0328947){16}{\line(-1,0){.0493421}}
\multiput(40.456,43.614)(-.0493421,-.0328947){16}{\line(-1,0){.0493421}}
\multiput(38.877,42.561)(-.0493421,-.0328947){16}{\line(-1,0){.0493421}}
\multiput(37.298,41.509)(-.0493421,-.0328947){16}{\line(-1,0){.0493421}}
\multiput(35.719,40.456)(-.0493421,-.0328947){16}{\line(-1,0){.0493421}}
\multiput(34.14,39.403)(-.0493421,-.0328947){16}{\line(-1,0){.0493421}}
\multiput(32.561,38.351)(-.0493421,-.0328947){16}{\line(-1,0){.0493421}}
\multiput(30.982,37.298)(-.0493421,-.0328947){16}{\line(-1,0){.0493421}}
\multiput(29.403,36.245)(-.0493421,-.0328947){16}{\line(-1,0){.0493421}}
\multiput(27.824,35.193)(-.0493421,-.0328947){16}{\line(-1,0){.0493421}}
\multiput(26.245,34.14)(-.0493421,-.0328947){16}{\line(-1,0){.0493421}}
\multiput(24.667,33.088)(-.0493421,-.0328947){16}{\line(-1,0){.0493421}}
\multiput(23.088,32.035)(-.0493421,-.0328947){16}{\line(-1,0){.0493421}}
\multiput(21.509,30.982)(-.0493421,-.0328947){16}{\line(-1,0){.0493421}}
\end{picture}
\linethickness{0.4pt}
\ifx\plotpoint\undefined\newsavebox{\plotpoint}\fi 
\begin{picture}(212,72)(0,0)
\put(90,10){\circle*{2}}
\put(170,30){\circle*{2}}
\put(150,70){\circle*{2}}
\put(190,70){\circle*{2}}
\put(50,10){\circle*{2}}
\put(130,10){\circle*{2}}
\put(10,10){\circle*{2}}
\put(110,10){\circle*{2}}
\put(190,30){\circle*{2}}
\put(170,70){\circle*{2}}
\put(210,70){\circle*{2}}
\put(70,10){\circle*{2}}
\put(150,10){\circle*{2}}
\put(30,10){\circle*{2}}
\put(110,10){\line(-1,0){20}}
\put(190,30){\line(-1,0){20}}
\put(170,70){\line(-1,0){20}}
\put(210,70){\line(-1,0){20}}
\put(70,10){\line(-1,0){20}}
\put(150,10){\line(-1,0){20}}
\put(30,10){\line(-1,0){20}}
\put(90,10){\circle{4}}
\put(170,30){\circle{4}}
\put(150,70){\circle{4}}
\put(190,70){\circle{4}}
\put(50,10){\circle{4}}
\put(130,10){\circle{4}}
\put(10,10){\circle{4}}
\put(110,10){\circle{4}}
\put(190,30){\circle{4}}
\put(170,70){\circle{4}}
\put(210,70){\circle{4}}
\put(70,10){\circle{4}}
\put(150,10){\circle{4}}
\put(30,10){\circle{4}}
\put(100,30){\circle*{2}}
\put(180,10){\circle*{2}}
\put(160,50){\circle*{2}}
\put(200,50){\circle*{2}}
\put(60,30){\circle*{2}}
\put(140,30){\circle*{2}}
\put(20,30){\circle*{2}}
\put(110,10){\line(-1,2){10}}
\put(190,30){\line(-1,-2){10}}
\put(170,70){\line(-1,-2){10}}
\put(210,70){\line(-1,-2){10}}
\put(70,10){\line(-1,2){10}}
\put(150,10){\line(-1,2){10}}
\put(30,10){\line(-1,2){10}}
\put(100,30){\line(-1,-2){10}}
\put(180,10){\line(-1,2){10}}
\put(160,50){\line(-1,2){10}}
\put(200,50){\line(-1,2){10}}
\put(60,30){\line(-1,-2){10}}
\put(140,30){\line(-1,-2){10}}
\put(20,30){\line(-1,-2){10}}
\multiput(19.93,29.93)(.0493421,-.0328947){16}{\line(1,0){.0493421}}
\multiput(21.509,28.877)(.0493421,-.0328947){16}{\line(1,0){.0493421}}
\multiput(23.088,27.824)(.0493421,-.0328947){16}{\line(1,0){.0493421}}
\multiput(24.667,26.772)(.0493421,-.0328947){16}{\line(1,0){.0493421}}
\multiput(26.245,25.719)(.0493421,-.0328947){16}{\line(1,0){.0493421}}
\multiput(27.824,24.667)(.0493421,-.0328947){16}{\line(1,0){.0493421}}
\multiput(29.403,23.614)(.0493421,-.0328947){16}{\line(1,0){.0493421}}
\multiput(30.982,22.561)(.0493421,-.0328947){16}{\line(1,0){.0493421}}
\multiput(32.561,21.509)(.0493421,-.0328947){16}{\line(1,0){.0493421}}
\multiput(34.14,20.456)(.0493421,-.0328947){16}{\line(1,0){.0493421}}
\multiput(35.719,19.403)(.0493421,-.0328947){16}{\line(1,0){.0493421}}
\multiput(37.298,18.351)(.0493421,-.0328947){16}{\line(1,0){.0493421}}
\multiput(38.877,17.298)(.0493421,-.0328947){16}{\line(1,0){.0493421}}
\multiput(40.456,16.245)(.0493421,-.0328947){16}{\line(1,0){.0493421}}
\multiput(42.035,15.193)(.0493421,-.0328947){16}{\line(1,0){.0493421}}
\multiput(43.614,14.14)(.0493421,-.0328947){16}{\line(1,0){.0493421}}
\multiput(45.193,13.088)(.0493421,-.0328947){16}{\line(1,0){.0493421}}
\multiput(46.772,12.035)(.0493421,-.0328947){16}{\line(1,0){.0493421}}
\multiput(48.351,10.982)(.0493421,-.0328947){16}{\line(1,0){.0493421}}
\multiput(59.93,29.93)(.0493421,-.0328947){16}{\line(1,0){.0493421}}
\multiput(61.509,28.877)(.0493421,-.0328947){16}{\line(1,0){.0493421}}
\multiput(63.088,27.824)(.0493421,-.0328947){16}{\line(1,0){.0493421}}
\multiput(64.667,26.772)(.0493421,-.0328947){16}{\line(1,0){.0493421}}
\multiput(66.245,25.719)(.0493421,-.0328947){16}{\line(1,0){.0493421}}
\multiput(67.824,24.667)(.0493421,-.0328947){16}{\line(1,0){.0493421}}
\multiput(69.403,23.614)(.0493421,-.0328947){16}{\line(1,0){.0493421}}
\multiput(70.982,22.561)(.0493421,-.0328947){16}{\line(1,0){.0493421}}
\multiput(72.561,21.509)(.0493421,-.0328947){16}{\line(1,0){.0493421}}
\multiput(74.14,20.456)(.0493421,-.0328947){16}{\line(1,0){.0493421}}
\multiput(75.719,19.403)(.0493421,-.0328947){16}{\line(1,0){.0493421}}
\multiput(77.298,18.351)(.0493421,-.0328947){16}{\line(1,0){.0493421}}
\multiput(78.877,17.298)(.0493421,-.0328947){16}{\line(1,0){.0493421}}
\multiput(80.456,16.245)(.0493421,-.0328947){16}{\line(1,0){.0493421}}
\multiput(82.035,15.193)(.0493421,-.0328947){16}{\line(1,0){.0493421}}
\multiput(83.614,14.14)(.0493421,-.0328947){16}{\line(1,0){.0493421}}
\multiput(85.193,13.088)(.0493421,-.0328947){16}{\line(1,0){.0493421}}
\multiput(86.772,12.035)(.0493421,-.0328947){16}{\line(1,0){.0493421}}
\multiput(88.351,10.982)(.0493421,-.0328947){16}{\line(1,0){.0493421}}
\multiput(109.93,9.93)(.0493421,.0328947){16}{\line(1,0){.0493421}}
\multiput(111.509,10.982)(.0493421,.0328947){16}{\line(1,0){.0493421}}
\multiput(113.088,12.035)(.0493421,.0328947){16}{\line(1,0){.0493421}}
\multiput(114.667,13.088)(.0493421,.0328947){16}{\line(1,0){.0493421}}
\multiput(116.245,14.14)(.0493421,.0328947){16}{\line(1,0){.0493421}}
\multiput(117.824,15.193)(.0493421,.0328947){16}{\line(1,0){.0493421}}
\multiput(119.403,16.245)(.0493421,.0328947){16}{\line(1,0){.0493421}}
\multiput(120.982,17.298)(.0493421,.0328947){16}{\line(1,0){.0493421}}
\multiput(122.561,18.351)(.0493421,.0328947){16}{\line(1,0){.0493421}}
\multiput(124.14,19.403)(.0493421,.0328947){16}{\line(1,0){.0493421}}
\multiput(125.719,20.456)(.0493421,.0328947){16}{\line(1,0){.0493421}}
\multiput(127.298,21.509)(.0493421,.0328947){16}{\line(1,0){.0493421}}
\multiput(128.877,22.561)(.0493421,.0328947){16}{\line(1,0){.0493421}}
\multiput(130.456,23.614)(.0493421,.0328947){16}{\line(1,0){.0493421}}
\multiput(132.035,24.667)(.0493421,.0328947){16}{\line(1,0){.0493421}}
\multiput(133.614,25.719)(.0493421,.0328947){16}{\line(1,0){.0493421}}
\multiput(135.193,26.772)(.0493421,.0328947){16}{\line(1,0){.0493421}}
\multiput(136.772,27.824)(.0493421,.0328947){16}{\line(1,0){.0493421}}
\multiput(138.351,28.877)(.0493421,.0328947){16}{\line(1,0){.0493421}}
\put(149.93,9.93){\line(1,0){.9677}}
\put(151.865,9.93){\line(1,0){.9677}}
\put(153.801,9.93){\line(1,0){.9677}}
\put(155.736,9.93){\line(1,0){.9677}}
\put(157.672,9.93){\line(1,0){.9677}}
\put(159.607,9.93){\line(1,0){.9677}}
\put(161.543,9.93){\line(1,0){.9677}}
\put(163.478,9.93){\line(1,0){.9677}}
\put(165.414,9.93){\line(1,0){.9677}}
\put(167.349,9.93){\line(1,0){.9677}}
\put(169.285,9.93){\line(1,0){.9677}}
\put(171.22,9.93){\line(1,0){.9677}}
\put(173.156,9.93){\line(1,0){.9677}}
\put(175.091,9.93){\line(1,0){.9677}}
\put(177.026,9.93){\line(1,0){.9677}}
\put(178.962,9.93){\line(1,0){.9677}}
\multiput(159.93,49.93)(.0320513,-.0641026){13}{\line(0,-1){.0641026}}
\multiput(160.763,48.263)(.0320513,-.0641026){13}{\line(0,-1){.0641026}}
\multiput(161.596,46.596)(.0320513,-.0641026){13}{\line(0,-1){.0641026}}
\multiput(162.43,44.93)(.0320513,-.0641026){13}{\line(0,-1){.0641026}}
\multiput(163.263,43.263)(.0320513,-.0641026){13}{\line(0,-1){.0641026}}
\multiput(164.096,41.596)(.0320513,-.0641026){13}{\line(0,-1){.0641026}}
\multiput(164.93,39.93)(.0320513,-.0641026){13}{\line(0,-1){.0641026}}
\multiput(165.763,38.263)(.0320513,-.0641026){13}{\line(0,-1){.0641026}}
\multiput(166.596,36.596)(.0320513,-.0641026){13}{\line(0,-1){.0641026}}
\multiput(167.43,34.93)(.0320513,-.0641026){13}{\line(0,-1){.0641026}}
\multiput(168.263,33.263)(.0320513,-.0641026){13}{\line(0,-1){.0641026}}
\multiput(169.096,31.596)(.0320513,-.0641026){13}{\line(0,-1){.0641026}}
\multiput(199.93,49.93)(-.0320513,-.0641026){13}{\line(0,-1){.0641026}}
\multiput(199.096,48.263)(-.0320513,-.0641026){13}{\line(0,-1){.0641026}}
\multiput(198.263,46.596)(-.0320513,-.0641026){13}{\line(0,-1){.0641026}}
\multiput(197.43,44.93)(-.0320513,-.0641026){13}{\line(0,-1){.0641026}}
\multiput(196.596,43.263)(-.0320513,-.0641026){13}{\line(0,-1){.0641026}}
\multiput(195.763,41.596)(-.0320513,-.0641026){13}{\line(0,-1){.0641026}}
\multiput(194.93,39.93)(-.0320513,-.0641026){13}{\line(0,-1){.0641026}}
\multiput(194.096,38.263)(-.0320513,-.0641026){13}{\line(0,-1){.0641026}}
\multiput(193.263,36.596)(-.0320513,-.0641026){13}{\line(0,-1){.0641026}}
\multiput(192.43,34.93)(-.0320513,-.0641026){13}{\line(0,-1){.0641026}}
\multiput(191.596,33.263)(-.0320513,-.0641026){13}{\line(0,-1){.0641026}}
\multiput(190.763,31.596)(-.0320513,-.0641026){13}{\line(0,-1){.0641026}}
\multiput(99.93,29.93)(-.0493421,-.0328947){16}{\line(-1,0){.0493421}}
\multiput(98.351,28.877)(-.0493421,-.0328947){16}{\line(-1,0){.0493421}}
\multiput(96.772,27.824)(-.0493421,-.0328947){16}{\line(-1,0){.0493421}}
\multiput(95.193,26.772)(-.0493421,-.0328947){16}{\line(-1,0){.0493421}}
\multiput(93.614,25.719)(-.0493421,-.0328947){16}{\line(-1,0){.0493421}}
\multiput(92.035,24.667)(-.0493421,-.0328947){16}{\line(-1,0){.0493421}}
\multiput(90.456,23.614)(-.0493421,-.0328947){16}{\line(-1,0){.0493421}}
\multiput(88.877,22.561)(-.0493421,-.0328947){16}{\line(-1,0){.0493421}}
\multiput(87.298,21.509)(-.0493421,-.0328947){16}{\line(-1,0){.0493421}}
\multiput(85.719,20.456)(-.0493421,-.0328947){16}{\line(-1,0){.0493421}}
\multiput(84.14,19.403)(-.0493421,-.0328947){16}{\line(-1,0){.0493421}}
\multiput(82.561,18.351)(-.0493421,-.0328947){16}{\line(-1,0){.0493421}}
\multiput(80.982,17.298)(-.0493421,-.0328947){16}{\line(-1,0){.0493421}}
\multiput(79.403,16.245)(-.0493421,-.0328947){16}{\line(-1,0){.0493421}}
\multiput(77.824,15.193)(-.0493421,-.0328947){16}{\line(-1,0){.0493421}}
\multiput(76.245,14.14)(-.0493421,-.0328947){16}{\line(-1,0){.0493421}}
\multiput(74.667,13.088)(-.0493421,-.0328947){16}{\line(-1,0){.0493421}}
\multiput(73.088,12.035)(-.0493421,-.0328947){16}{\line(-1,0){.0493421}}
\multiput(71.509,10.982)(-.0493421,-.0328947){16}{\line(-1,0){.0493421}}
\end{picture}
\end{center}
\caption{Two graphs in ${\cal G}$.
The left graph contains one copy of $K_4^*$.
Note that only the copies of $K_3$ contain unmarked vertices of degree less than $3$, which implies that all additional edges are incident with these vertices.}\label{fig2}
\end{figure}

\begin{theorem}\label{theorem3}
A connected subcubic graph $G$ satisfies $2\alpha(G)={\rm diss}(G)$ if and only if it belongs to ${\cal G}$.
\end{theorem}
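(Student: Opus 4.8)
The plan is to prove the two implications separately; the \emph{if}-direction is routine and the \emph{only if}-direction carries essentially all of the work, via a structural study of the maximum dissociation sets of $G$.

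For the \emph{if}-direction, let $G\in{\cal G}$. If $G=K_4$, then $\alpha(G)=1$ and ${\rm diss}(G)=2$, so we are done; otherwise $G$ arises from a disjoint union $G_0$ of copies of $K_2$, $K_3$, and $K_4^*$ by adding edges, each incident with at most one marked vertex, and we set $D=D(G)$. In each copy the marked vertices induce a graph of maximum degree at most $1$ (an edge within $K_2$ and within $K_3$, two disjoint edges within $K_4^*$), and no added edge joins two marked vertices; hence $G[D]$ has maximum degree at most $1$, so $D$ is a dissociation set and $\alpha(G)\ge\tfrac12{\rm diss}(G)\ge\tfrac12|D|$ by (\ref{e0-1}). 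Conversely, each of $K_2$, $K_3$, $K_4^*$ has independence number equal to half its number of marked vertices, so $\alpha(G_0)=\tfrac12|D|$, and since $G$ is obtained from $G_0$ by adding edges, $\alpha(G)\le\alpha(G_0)=\tfrac12|D|$. Therefore $2\alpha(G)={\rm diss}(G)$.

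For the \emph{only if}-direction, let $G$ be a connected subcubic graph with $2\alpha(G)={\rm diss}(G)$, assume $G\ne K_4$, and fix a maximum dissociation set $D$. Every component of $G[D]$ has order $1$ or $2$, and picking one vertex per component yields an independent set of size at most $\alpha(G)=\tfrac12|D|$; a short count then forces every component to have order exactly $2$, so $G[D]=kK_2$ on pairs $e_1=x_1y_1,\dots,e_k=x_ky_k$ with $k=\alpha(G)$. In particular $G$ has no edge between distinct pairs, and every \emph{transversal} (one vertex from each pair) is a \emph{maximum} independent set of $G$. I would single out two reusable gadgets: the \emph{transversal swap} -- adjoining a vertex $v$ of $R:=V(G)\setminus D$ to a transversal that avoids all neighbours of $v$ gives an independent set, which therefore has size at most $k$ -- and the \emph{dissociation swap} -- replacing the two vertices of a pair $e_i$ by two common neighbours of $x_i$ and $y_i$ gives a dissociation set of size $|D|$, which therefore again induces a perfect matching on all its vertices. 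Using the transversal swap together with the maximality of $D$, every $v\in R$ has exactly two or three neighbours in $D$; if two, they form a single pair $e_i$ (and then $v$ has at most one further neighbour, lying in $R$); if three, they consist of a whole pair $e_i$ together with exactly one vertex of another pair $e_j$ (and then $v$ has no further neighbour).

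The remaining task is to recognise the blocks. Put $A_i=N(x_i)\setminus\{y_i\}$ and $B_i=N(y_i)\setminus\{x_i\}$, both subsets of $R$. If $A_i=B_i=\emptyset$ for some $i$, then $e_i$ is a component and $G=K_2\in{\cal G}$. If $|A_i\cap B_i|=2$, say $A_i=B_i=\{a,b\}$, then $a\not\sim b$ (otherwise $\{x_i,y_i,a,b\}$ induces a $K_4$-component and $G=K_4$), and by the dissociation swap each of $a,b$ has exactly one further neighbour, which lies in $D$; three transversal swaps, adjoining $\{a,b\}$ to suitable transversals, force these two further neighbours to be the two endpoints $x_j,y_j$ of a single common pair $e_j$, so that $\{x_i,y_i,a,b,x_j,y_j\}$ induces precisely a copy of $K_4^*$ in which $x_i,y_i,a,b$ have degree $3$ -- a $K_4^*$-block -- and one more transversal swap shows that $e_j$ is not itself of this doubled type and is the partner of no other doubled pair. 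The pairs that are neither a component nor a $cd$-pair nor an $\{x,y\}$-pair of some $K_4^*$-block remain; for such a pair $e_i$ one checks (using that a type-I neighbour of $x_i$ or $y_i$ would be a common neighbour, and that a type-II one cannot here be the common neighbour of a doubled pair) that every neighbour of $x_i$ or $y_i$ outside a single common neighbour is the middle vertex of another $K_3$-block, so $e_i$ is a $K_3$-block $\{x_i,y_i,z_i\}$ when the common neighbour $z_i$ exists, and a $K_2$-block otherwise. These blocks partition $V(G)$; every edge with both ends in $D$ lies inside a single pair hence inside a single block, so every other edge is incident with at most one marked vertex, and $G\in{\cal G}$ with $D(G)=D$.

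The hard part will be the third paragraph: showing that every pair falls into exactly one block type, that all added edges are indeed incident with the middle vertices of $K_3$-blocks (the only unmarked vertices of degree less than $3$), and that each degenerate branch really does give $G\cong K_4$. This is a somewhat intricate case analysis, the main source of friction being that a type-II vertex can be one of the two unmarked vertices of a $K_4^*$-block or instead the middle vertex of a $K_3$-block carrying an extra edge to a marked vertex, so these roles must be separated pair by pair with the transversal and dissociation swaps.
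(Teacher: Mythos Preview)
Your approach is essentially the paper's: both show that $G[D]$ is a perfect matching, use what you call the transversal swap to force every $v\in R$ to see both ends of some pair $K(v)$, and then analyse the pairs with two common $R$-neighbours to extract the $K_4^*$-blocks before classifying the remaining pairs as $K_3$- or $K_2$-blocks.

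One genuine gap: your \emph{dissociation swap} is not valid as stated. If $a$ has a third neighbour $x_j\in D\setminus\{x_i,y_i\}$, then in $D'=(D\setminus\{x_i,y_i\})\cup\{a,b\}$ the vertex $x_j$ is adjacent both to its old partner $y_j$ and to $a$, so $G[D']$ has a vertex of degree $2$ and $D'$ is \emph{not} a dissociation set. Hence the swap does not certify that $a$ and $b$ each have a further $D$-neighbour; at best it shows that one of them does (otherwise $D'$ really is a maximum dissociation set with two isolated vertices, contradicting the perfect-matching structure). Fortunately you do not need this tool: the transversal swap alone suffices. If, say, $a$ had no neighbour in $D\setminus\{x_i,y_i\}$, choose a transversal $T$ avoiding $b$'s at most one further $D$-neighbour, delete the $e_i$-vertex from $T$, and adjoin $\{a,b\}$; the result is independent of size $k+1$. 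This is exactly what the paper does (adjoining both $u=a$ and $v=b$ simultaneously), and the same device then pins the two third neighbours to a single pair $e_j$, shows no $w\in R$ has $K(w)=e_j$, and shows $e_j$ is the partner of no other doubled pair --- all via transversal swaps adjoining two, three, or four $R$-vertices. Once you drop the dissociation swap and route everything through transversal swaps, your outline and the paper's proof coincide.
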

Having considered the influence of degree conditions on (\ref{e0-1}),
we now add further structural conditions.
For bipartite graphs, a simple argument yields the following.

\begin{proposition}\label{proposition1}
If $G$ is a connected bipartite graph of maximum degree at most $\Delta$, then
\begin{eqnarray*}\label{e0-2}
\alpha(G)\geq \frac{1}{2}\left(1+\frac{1}{2(\Delta-1)}\right){\rm diss}(G)-\frac{1}{2(\Delta-1)}.
\end{eqnarray*}
\end{proposition}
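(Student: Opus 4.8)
The plan is to derive the inequality from two ingredients: the elementary bound $\alpha(G)\ge n(G)/2$, which holds for every bipartite graph $G$ (take the larger colour class, or use K\"onig's theorem in the form $\alpha(G)=n(G)-\mu(G)$ together with $\mu(G)\le n(G)/2$), and a lower bound on the number $n(G)-{\rm diss}(G)$ of vertices lying outside a maximum dissociation set of $G$. Bipartiteness will be used only through the first ingredient.

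For the second ingredient, I would fix a maximum dissociation set $D$ of $G$ and set $R=V(G)\setminus D$, so that $|R|=n(G)-{\rm diss}(G)$. Since $G[D]$ has maximum degree at most $1$, each of its components has one or two vertices, and hence $G[D]$ has at least $\lceil {\rm diss}(G)/2\rceil \ge {\rm diss}(G)/2$ components. Now contract each component of $G[D]$ to a single vertex and delete the resulting loops and parallel edges, obtaining a simple graph $\hat G$. Then $\hat G$ is connected (as $G$ is), it has $c(G[D])+|R|$ vertices, and the $c(G[D])$ vertices arising from the components of $G[D]$ form an independent set in $\hat G$, because two distinct components of $G[D]$ are joined by no edge of $G$. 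Consequently every edge of $\hat G$ is incident with a vertex of $R$, and, since contracting edges inside $D$ does not increase the degree of any vertex of $R$, this gives $|E(\hat G)|\le \sum_{r\in R}d_{\hat G}(r)\le \Delta|R|$. On the other hand, connectedness of $\hat G$ yields $|E(\hat G)|\ge c(G[D])+|R|-1\ge {\rm diss}(G)/2+|R|-1$. Comparing the two estimates gives $2(\Delta-1)|R|\ge {\rm diss}(G)-2$.

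Putting the pieces together, $\alpha(G)\ge \tfrac{1}{2}n(G)=\tfrac{1}{2}\big({\rm diss}(G)+|R|\big)\ge \tfrac{1}{2}{\rm diss}(G)+\tfrac{{\rm diss}(G)-2}{4(\Delta-1)}$, and a one-line rearrangement shows that the right-hand side equals $\tfrac{1}{2}\big(1+\tfrac{1}{2(\Delta-1)}\big){\rm diss}(G)-\tfrac{1}{2(\Delta-1)}$, which is the asserted bound. (The degenerate case $R=\emptyset$, in which $G$ is a single edge, can be checked directly.) I do not expect a real obstacle here — consistent with the remark that this is a simple argument; the only points needing a little care are the estimate $c(G[D])\ge {\rm diss}(G)/2$ and the bookkeeping showing that the contraction does not increase the degrees of the vertices of $R$. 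In fact the displayed inequality $2(\Delta-1)\big(n(G)-{\rm diss}(G)\big)\ge {\rm diss}(G)-2$ holds for every connected graph of maximum degree at most $\Delta$, and only the final step invokes bipartiteness.
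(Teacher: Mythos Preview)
Your proof is correct and follows essentially the same approach as the paper: both use $\alpha(G)\ge n/2$ for bipartite $G$ and derive the same counting inequality relating $|R|=n-{\rm diss}(G)$ to ${\rm diss}(G)$ via connectedness and the degree bound $\Delta$. The only cosmetic difference is that the paper deletes the edges incident with $R$ and bounds the resulting number of components, whereas you contract the components of $G[D]$ and use the spanning-tree edge bound --- these are dual formulations of the identical estimate.
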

Proposition \ref{proposition1} is best possible,
and its proof implies that all extremal graphs are trees.
Generalizing the subcubic extremal tree shown in Figure \ref{fig1} 
easily allows to construct arbitrarily large extremal trees of maximum degree $\Delta$
for all values of $\Delta\geq 2$. 

\begin{figure}[H]
\begin{center}
\unitlength 0.4mm 
\linethickness{0.4pt}
\ifx\plotpoint\undefined\newsavebox{\plotpoint}\fi 
\begin{picture}(246,66)(0,0)
\put(5,25){\circle*{3}}
\put(65,25){\circle*{3}}
\put(145,25){\circle*{3}}
\put(225,25){\circle*{3}}
\put(5,45){\circle*{3}}
\put(65,45){\circle*{3}}
\put(145,45){\circle*{3}}
\put(225,45){\circle*{3}}
\put(25,25){\circle*{3}}
\put(85,25){\circle*{3}}
\put(165,25){\circle*{3}}
\put(245,25){\circle*{3}}
\put(25,45){\circle*{3}}
\put(85,45){\circle*{3}}
\put(165,45){\circle*{3}}
\put(245,45){\circle*{3}}
\put(25,65){\circle*{3}}
\put(45,25){\circle*{3}}
\put(105,25){\circle*{3}}
\put(185,25){\circle*{3}}
\put(125,25){\circle*{3}}
\put(205,25){\circle*{3}}
\put(45,45){\circle*{3}}
\put(105,45){\circle*{3}}
\put(185,45){\circle*{3}}
\put(125,45){\circle*{3}}
\put(205,45){\circle*{3}}
\put(5,25){\line(0,1){20}}
\put(65,25){\line(0,1){20}}
\put(145,25){\line(0,1){20}}
\put(225,25){\line(0,1){20}}
\put(5,45){\line(1,1){20}}
\put(25,65){\line(0,-1){20}}
\put(25,45){\line(0,-1){20}}
\put(85,45){\line(0,-1){20}}
\put(165,45){\line(0,-1){20}}
\put(245,45){\line(0,-1){20}}
\put(25,65){\line(1,-1){20}}
\put(45,45){\line(0,-1){20}}
\put(105,45){\line(0,-1){20}}
\put(185,45){\line(0,-1){20}}
\put(125,45){\line(0,-1){20}}
\put(205,45){\line(0,-1){20}}
\put(65,5){\circle*{3}}
\put(145,5){\circle*{3}}
\put(225,5){\circle*{3}}
\put(105,65){\circle*{3}}
\put(185,65){\circle*{3}}
\put(45,25){\line(1,-1){20}}
\put(125,25){\line(1,-1){20}}
\put(205,25){\line(1,-1){20}}
\put(65,5){\line(0,1){20}}
\put(145,5){\line(0,1){20}}
\put(225,5){\line(0,1){20}}
\put(85,25){\line(-1,-1){20}}
\put(165,25){\line(-1,-1){20}}
\put(245,25){\line(-1,-1){20}}
\put(105,65){\line(-1,-1){20}}
\put(185,65){\line(-1,-1){20}}
\put(105,65){\line(0,-1){20}}
\put(185,65){\line(0,-1){20}}
\put(105,65){\line(1,-1){20}}
\put(185,65){\line(1,-1){20}}
\end{picture}
\end{center}
\caption{A subcubic tree $T$ with $\alpha(T)=\frac{5}{8}{\rm diss}(T)-\frac{1}{4}$.
}\label{fig1}
\end{figure}
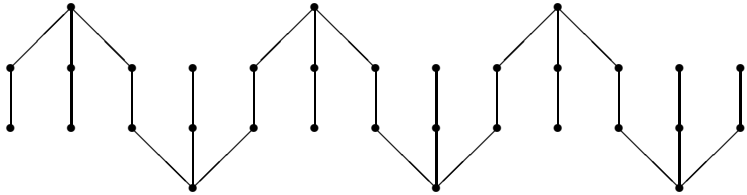
For regular triangle-free graphs, we obtain the following result.

\begin{theorem}\label{theorem2}
If $G$ is a triangle-free $\Delta$-regular graph for some $\Delta\geq 3$, then 
$$\alpha(G)\geq 
\frac{1}{2}\left(1+\frac{(\Delta-1)(\Delta+1)}{2^{\Delta}\Delta^2+(\Delta-1)(\Delta+1)}\right){\rm diss}(G)
=\frac{1}{2}\left(1+\Omega\left(\frac{1}{2^\Delta}\right)\right){\rm diss}(G).$$
\end{theorem}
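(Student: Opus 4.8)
The plan is to work with a maximum dissociation set $D$, decompose it according to the structure of $G[D]$, and enlarge it into a large independent set. Write $G[D]$ as the disjoint union of a set $I$ of isolated vertices and a matching $M=\{u_1v_1,\dots,u_mv_m\}$, so that ${\rm diss}(G)=|D|=|I|+2m$, and note that taking $I$ together with one endpoint of each matching edge already gives $\alpha(G)\geq |I|+m$. Comparing this baseline with the target inequality, an elementary rearrangement shows that it suffices to exhibit an independent set of size $|I|+m+g$ with $g\geq\frac{(\Delta-1)(\Delta+1)\,m-\tfrac12\,2^{\Delta}\Delta^2\,|I|}{2^{\Delta}\Delta^2+(\Delta-1)(\Delta+1)}$; in particular, when $|I|$ is not too small relative to $m$ the baseline bound is already enough, so the real work lies in the regime where $D$ is essentially a perfect matching on its own vertex set, where a gain of order $m/2^{\Delta}$ is needed.

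First I would record the local structure forced by $\Delta$-regularity, triangle-freeness, and maximality of $D$. Put $R=V(G)\setminus D$. For a matching edge $u_iv_i$, no neighbour of $u_i$ other than $v_i$ can lie in $D$ (else $u_i$ would have degree at least $2$ in $G[D]$), so $N(u_i)\setminus\{v_i\}$ and $N(v_i)\setminus\{u_i\}$ are $(\Delta-1)$-subsets of $R$; triangle-freeness makes each of them an \emph{independent} set and makes the two disjoint from each other. Likewise every vertex of $I$ has all $\Delta$ of its neighbours in $R$, and every vertex of $R$ has a neighbour in $D$ — and if it has exactly one such neighbour, that neighbour must be matched (otherwise $D$ could be enlarged, contradicting maximality). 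These facts let me set up, for each matching edge, a family of \emph{exchanges}: delete $\{u_i,v_i\}$ and reinsert an independent subset of $\big(N(u_i)\cup N(v_i)\big)\cap R$ of size at least two.

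Next I would assemble the independent set. Starting from $I$ together with one endpoint of each matching edge, I would carry out as many mutually non-conflicting exchanges as possible, each of which replaces the retained endpoint of its edge by at least two vertices of $R$ and hence yields a net gain of at least one. The combinatorial core is a lower bound on how many exchanges can be performed simultaneously: the effect of an exchange at $u_iv_i$ reaches only vertices within a bounded neighbourhood of the edge, and, since each vertex of $R$ has at most $\Delta$ neighbours while each matched vertex has at most $\Delta-1$ neighbours in $R$, the number of matching edges that can interfere with a given one is at most roughly $2^{\Delta}\Delta^{2}/\big((\Delta-1)(\Delta+1)\big)$ — the factor $2^{\Delta}$ arising from having to guard against every subset of a relevant neighbourhood that a competing exchange could occupy. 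A greedy (independent-set) argument on the resulting conflict graph on the $m$ matching edges then produces at least $\frac{(\Delta-1)(\Delta+1)}{2^{\Delta}\Delta^{2}+(\Delta-1)(\Delta+1)}\,m$ successful exchanges; combining this with the baseline $|I|+m$ and rearranging yields the claimed inequality, and the asymptotic form follows since this fraction is $\Omega(2^{-\Delta})$.

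The main obstacle is precisely the quantitative control of this conflict graph: bounding how far the influence of a single exchange propagates, arranging that conflicts between two exchanges can be detected from local information, and coordinating the exchanges with the choice of retained endpoints for the non-exchanged edges and with the vertices of $I$ so that the final vertex set really is independent. Making the bookkeeping land on the exact constant $\frac{(\Delta-1)(\Delta+1)}{2^{\Delta}\Delta^{2}+(\Delta-1)(\Delta+1)}$ — rather than on some unspecified $\Omega(2^{-\Delta})$ quantity — is where the careful counting must be done, and it is there that $\Delta$-regularity is used in an essential way, both to bound degrees inside $D$ and to bound the number of edges from $R$ back into $D$.
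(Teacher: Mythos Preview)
Your approach is genuinely different from the paper's, and the key quantitative step is not actually carried out. The paper does not build a conflict graph or perform local exchanges; it uses a short probabilistic argument. Given $D$ with $p$ isolated vertices and $q$ matching edges, it first records the baseline $\alpha(G)\ge p+q={\rm diss}(G)-q$. For the second bound it selects, independently for each matching edge, one endpoint uniformly at random (forming $I_1$), then takes a uniformly random linear order on $V(G)\setminus D_1$ and keeps each vertex that precedes all of its neighbours there and has no neighbour in $I_1$ (a Caro--Wei variant). A vertex of $R$ with exactly $i$ neighbours in $D_1$ survives with probability $\frac{1}{2^{i}(\Delta-i+1)}$; using $\sum_i i r_i = 2(\Delta-1)q$ and $\max_i 2^{i}i(\Delta-i+1)=2^{\Delta}\Delta$ gives $\alpha(G)\ge q+\frac{p}{\Delta+1}+\frac{2(\Delta-1)q}{2^{\Delta}\Delta}$. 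Since one bound decreases and the other increases in $q$, equating them yields the stated constant. The exponential factor $2^{\Delta}$ thus arises cleanly from the event ``none of the $i\le\Delta$ random endpoint choices hit me,'' not from any combinatorial subset count.

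In your outline the crucial claim --- that the conflict graph on the $m$ matching edges has maximum degree about $2^{\Delta}\Delta^{2}/((\Delta-1)(\Delta+1))$ --- is asserted, not proved, and the heuristic you give (``guarding against every subset of a relevant neighbourhood'') does not describe a degree in a conflict graph: degree counts \emph{edges}, not subsets of a neighbourhood. A natural conflict relation (two matching edges conflict if their designated $R$-neighbourhoods touch) has degree polynomial in $\Delta$, which would give a \emph{stronger} bound than the theorem; but then the exchanges themselves no longer yield a net gain of $+1$, because an inserted vertex of $R$ may be adjacent to retained endpoints of \emph{non}-exchanged edges or to vertices of $I$, forcing further removals. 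Reconciling these two effects so that the answer comes out to exactly $\frac{(\Delta-1)(\Delta+1)}{2^{\Delta}\Delta^{2}+(\Delta-1)(\Delta+1)}$ is the entire content of the proof, and your sketch does not indicate how to do it. As written, the proposal is a plausible plan for an $\Omega(2^{-\Delta})$-type statement but not a proof of the theorem; the paper's probabilistic route reaches the exact constant in a page.
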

For $\Delta=3$, that is, for a triangle-free cubic graph $G$, 
Theorem \ref{theorem2} implies
$\alpha(G)\geq \frac{11}{20}{\rm diss}(G)$.
As pointed out after the proof of Theorem \ref{theorem1},
its proof allows to show $\alpha(G)\geq \frac{5}{8}{\rm diss}(G)$ 
for a triangle-free cubic graph $G$.
We believe that $\frac{5}{8}$ is the right factor 
also for triangle-free subcubic graphs.

\begin{conjecture}\label{conjecture2}
If $G$ is a connected triangle-free subcubic graph, 
then $\alpha(G)\geq \frac{5}{8}{\rm diss}(G)-\frac{1}{4}$.
\end{conjecture}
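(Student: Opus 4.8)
The statement is posed as a conjecture, so I can only outline a plausible attack; the plan is to push the argument that (as remarked after Theorem~\ref{theorem1}) already gives the factor $\tfrac58$ for triangle-free \emph{cubic} graphs down to the subcubic setting, the additive $-\tfrac14$ being precisely the slack that one pendant ``gadget'', of the kind assembling the extremal tree in Figure~\ref{fig1}, is allowed to introduce. Note first that the conjecture says exactly that triangle-freeness \emph{alone} forces the bipartite bound of Proposition~\ref{proposition1} with $\Delta=3$, so the extremal graphs ought to be precisely the extremal trees of that proposition.

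I would argue by induction with the potential $\Phi(G):=\alpha(G)-\tfrac58{\rm diss}(G)+\tfrac14$, trying to prove $\Phi(G)\ge 0$ for every connected triangle-free subcubic $G$. For the base one checks small graphs directly; e.g.\ $\Phi(K_1)=\tfrac58>0$ and $\Phi(K_2)=0$, with $K_2$ tight. For the step, fix a maximum dissociation set $D$; as in the derivation of (\ref{e0-1}), $G[D]$ splits into an independent set $A$ with $|A|=\ell$ and a matching $M$ with $k$ edges, and — since $A\cup V(M)\subseteq D$ — there are in fact \emph{no} edges of $G$ between $A$ and $V(M)$; hence $A$ together with one endpoint of each edge of $M$ is an independent set of size $\ell+k$, which already yields $\Phi(G)\ge 0$ unless $M$ is large relative to $\ell$. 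In the remaining regime I would use two structural facts: by maximality of $D$, every vertex of $V(G)\setminus D$ has at least two neighbours in $D$ or has its unique $D$-neighbour inside $V(M)$; and by triangle-freeness, for each edge $uv\in M$ the (at most two apiece) further neighbours of $u$ and of $v$ are pairwise distinct vertices of $V(G)\setminus D$. One then wants a local charging scheme — a discharging argument, or a deficiency form of Hall's theorem — relating the edges of $M$ to their neighbourhoods in $V(G)\setminus D$: each edge of $M$ ``owns'' up to four half-edges pointing into $V(G)\setminus D$, and the goal is to show that one may keep one endpoint of \emph{every} edge of $M$, all of $A$, and additionally an independent set of roughly $\tfrac14 k$ vertices of $V(G)\setminus D$, producing an independent set of size at least $\tfrac58{\rm diss}(G)-\tfrac14$. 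In the cubic case this is exactly the computation behind the $\tfrac58$ bound mentioned after Theorem~\ref{theorem1}.

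The role of the induction is then to remove the low-degree ``boundary'': whenever $G$ contains a pendant substructure attached through a single vertex or a small edge cut whose presence changes $\alpha$ and ${\rm diss}$ in a ratio of at least $5:8$ (pendant paths of suitable length, and the Figure~\ref{fig1} gadget itself), one strips it, observes that $\Phi$ does not increase, and invokes the inductive hypothesis on the remainder; the equality cases are tracked so that a graph with $\Phi(G)=0$ and equality throughout is forced to be built from that gadget over a $K_2$. After all such reductions $G$ is ``almost cubic'' — only a bounded part has degree below $3$ — and the charging step above, now with a negligible low-degree correction, finishes the argument.

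The hard part is twofold. Because the inequality is sharp, the discharging in the core step must be calibrated so tightly that essentially no local configuration around an edge of $M$ carries any slack; this demands a full classification of the structure of $V(M)$ together with its neighbourhood in $V(G)\setminus D$, and a verification that the equality configurations glue only into trees. More seriously, one has to make the stripping reductions and the charging argument \emph{compatible}: configurations that are almost strippable, but on which the naive induction would lose a positive constant, must be excluded — quite likely by strengthening the inductive statement to a finer potential of the form $-\tfrac18\bigl(1+\delta(G)\bigr)$, with $\delta(G)$ counting the ``deficient ends'' of $G$ and additive under gluing at a cut edge — and it is far from clear that such a potential coexists with the charging bookkeeping. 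This is the step on which the plan is most likely to fail, and presumably the reason the statement is left as a conjecture here.
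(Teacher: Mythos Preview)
The paper does not prove Conjecture~\ref{conjecture2}; it is stated as an open problem, and you correctly recognise this and offer only an outline rather than a proof. There is therefore no paper proof to compare against, and your honest framing of the difficulties (the tightness of the discharging and the compatibility of the reductions with the charging) is appropriate.

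One factual point in your sketch should be corrected. You write that the charging scheme you propose ``is exactly the computation behind the $\tfrac58$ bound mentioned after Theorem~\ref{theorem1}'' in the cubic case. It is not. The paper's derivation of $\alpha(G)\ge\tfrac58\,{\rm diss}(G)$ for triangle-free cubic $G$ is global, not local: it plugs the Staton/Heckman--Thomas bound $\alpha(G)\ge\tfrac{5}{14}n$ into inequality~(\ref{ethm1}), obtaining $\tfrac{14}{5}\alpha(G)\ge n\ge 3\,{\rm diss}(G)-2\alpha(G)$ and hence $\alpha(G)\ge\tfrac58\,{\rm diss}(G)$. No discharging or local structure around matching edges is used. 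This matters for your plan, because the route that actually succeeds in the cubic case gives no template for the local argument you have in mind; you would be developing that machinery from scratch, not adapting an existing computation.

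Your structural observations are sound (in particular, that a vertex of $V(G)\setminus D$ with a unique $D$-neighbour must have that neighbour in $V(M)$, and that triangle-freeness separates the outside neighbours of the two ends of a matching edge). But the core of your plan --- extracting an extra independent set of size roughly $\tfrac14 k$ in $V(G)\setminus D$ compatible with a choice of matching endpoints --- is asserted rather than argued, and it is precisely here that the conjecture lives. As written, this remains a plausible programme, not a proof.
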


\section{Proofs}\label{section2}

We begin with the construction of 
$r$-regular extremal graphs for (\ref{e0-1}) for $r\geq 4$:
Let $k$ and $\ell$ be positive integers at least $2$.
Let the graph $G(k,\ell)$ arise from the disjoint union of $\ell$ copies $K(1),\ldots,K(\ell)$ of $K_{2k}$,
where we partition the vertex set of each $K(i)$ into two sets $L(i)$ and $R(i)$ of order $k$ for every $i$,
by adding a matching $M(i)$ of size $k$ between $R(i)$ and $L(i+1)$ for every $i$,
where we consider $i$ modulo $\ell$.
Since $G(k,\ell)$ is covered by $\ell$ cliques, 
we have $\alpha(G(k,\ell))\leq \ell$ and ${\rm diss}(G(k,\ell))\leq 2\ell$.
Since a set containing two vertices from $L(i)$ for every $i$ is a dissociation set, 
we have $2\alpha(G(k,\ell))\geq {\rm diss}(G(k,\ell))\geq 2\ell$.
Altogether, it follows that $2\alpha(G(k,\ell))={\rm diss}(G(k,\ell))$.
By construction, the graph $G(k,\ell)$ is $2k$-regular.
Let the graph $G'(k,\ell)$ arise from $G(k,\ell)$
by adding a matching of size $k$ that is disjoint from $M(i)$ between $R(i)$ and $L(i+1)$ for every $i$,
where we consider $i$ modulo $\ell$.
Arguing as above, it follows that $G'(k,\ell)$ is $(2k+1)$-regular
and satisfies $2\alpha(G'(k,\ell))={\rm diss}(G'(k,\ell))$.
For the sake of completeness, let us point out that 
$\alpha(G)={\rm diss}(G)$ if $G$ is $0$-regular,
$2\alpha(G)={\rm diss}(G)$ if $G$ is $1$-regular, 
$\alpha(C_n)=\left\lfloor\frac{n}{2}\right\rfloor$, and 
${\rm diss}(C_n)=\left\lfloor\frac{2n}{3}\right\rfloor$.

Our next goal is the proof of Theorem \ref{theorem1},
which uses the following simple lemma.

\begin{lemma}\label{lemmaDiss}
If $G$ is a subcubic graph and 
$D$ is a maximum dissociation set of $G$ 
that maximizes the number $p$ of isolated vertices in $G[D]$, 
then $G[V(G) \setminus D]$ 
has maximum degree at most $1$.
\end{lemma}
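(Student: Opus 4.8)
The plan is to argue by contradiction, using the extremality of $D$ — first that $|D|$ is maximum among dissociation sets, and second (tie-breaking) that $p$, the number of isolated vertices of $G[D]$, is as large as possible. Suppose $R:=V(G)\setminus D$ induces a subgraph of maximum degree at least $2$, so there is a vertex $v\in R$ with at least two neighbours $x,y\in R$. Since $G$ is subcubic, $v$ has at most one further neighbour, which (if it exists) lies in $D$. The idea is to move $v$ out of $R$ and into the dissociation set, possibly after discarding a bounded amount of $D$, and show this either strictly increases $|D|$ (contradicting maximality) or keeps $|D|$ the same while strictly increasing $p$ (contradicting the tie-break). Throughout I will use that $D$ is a dissociation set, so $G[D]$ has components of order $1$ or $2$, and that $R$ cannot contain any vertex all of whose neighbours lie in $D$ with that vertex ``fitting'' as a new isolated vertex — precisely the kind of local move we want to rule out.

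First I would handle the case that $v$ has no neighbour in $D$ at all: then $D\cup\{v\}$ is still a dissociation set (we only created a new isolated-or-pendant component at $v$, whose neighbours $x,y$ are outside $D$), so $|D\cup\{v\}|>|D|$ contradicts maximality of $D$. Next, suppose $v$ has exactly one neighbour $w\in D$. If $w$ is isolated in $G[D]$, then in $D':=D\cup\{v\}$ the component of $w$ becomes the edge $wv$ — still a valid dissociation set of larger size, contradiction. If instead $w$ lies in a $K_2$-component $\{w,w'\}$ of $G[D]$, consider $D':=(D\setminus\{w'\})\cup\{v\}$: this has the same size as $D$, and now $w,v$ form an edge-component while $w'$ has been removed; I must check $D'$ is a dissociation set (the only danger is $v$ having another $D'$-neighbour, but $v$'s only $D$-neighbour was $w$ and $x,y\notin D'$), and then count isolated vertices. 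Here $p$ can only fail to increase if removing $w'$ does not create an isolated vertex and $v$ is not isolated — but $v$ sits in the edge $wv$, so it is not isolated, and the neighbours of $w'$ other than $w$, if any, lay in $R$, so removing $w'$ changes no components inside $D$. So $p(D')\ge p(D)$, and I then need the strict inequality: this is the delicate point.

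The main obstacle is exactly this last bookkeeping — ensuring that in the size-preserving exchanges we genuinely gain an isolated vertex, so that the tie-break is violated rather than merely matched. The cleanest route is to choose $v$ more carefully at the outset: among all vertices of $R$ with two neighbours in $R$, and among all admissible local moves, pick one that is guaranteed to increase $p$, or else derive a size increase. Concretely, when $v$ has a neighbour $w$ in a $K_2$-component $\{w,w'\}$, after forming $D'=(D\setminus\{w'\})\cup\{v\}$ I observe $G[D']$ has a new edge-component $\{v,w\}$ in place of $\{w,w'\}$ — the number of components, and in particular of isolated components, is unchanged unless some neighbour of $w'$ inside $D'$ got isolated, which cannot happen since $w'$'s only $D$-neighbour was $w$. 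That leaves $p(D')=p(D)$, which is not yet a contradiction; the fix is to instead delete $v$'s \emph{other} $R$-neighbour's incident structure, or to notice that one may pass from $D'$ to $D'\cup\{x\}$ or $D'\cup\{y\}$: since $v$ already uses its one ``slot'' on $w$, adding $x$ would force $vx$ and $vw$, violating degree $1$ at $v$ — so that fails too. The resolution I would ultimately commit to: take $v\in R$ with \emph{two} neighbours $x,y$ in $R$, and replace $D$ by $D\cup\{v\}$ \emph{minus} any single $D$-vertex that is now over-degree; because $v$'s $D$-neighbourhood has size at most $1$, at most one vertex needs deleting, giving $|D'|\ge|D|$, and when $|D'|=|D|$ the deleted vertex's partner (if the deleted vertex was in a $K_2$) becomes isolated in $G[D']$ while $v$ joins a pendant edge — a net gain of at least one isolated vertex, contradicting maximality of $p$. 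I would verify the handful of sub-cases (neighbour in $D$ isolated vs.\ in a $K_2$; the $K_2$ partner $w'$ having its remaining neighbours in $R$ or $D$) explicitly, each a two-line check, and in every branch land on either $|D'|>|D|$ or $|D'|=|D|$ with $p(D')>p(D)$.
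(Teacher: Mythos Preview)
Your final committed move --- form $D\cup\{v\}$ and delete the unique vertex that is now over-degree --- is exactly the paper's swap: in your notation, the over-degree vertex is $w$ (not $w'$), so $D'=(D\setminus\{w\})\cup\{v\}$. This is the same approach as the paper, which goes to this swap in one line rather than via your detour through $(D\setminus\{w'\})\cup\{v\}$; removing the partner $w'$ merely trades one $K_2$-component for another and, as you observed, cannot increase $p$.

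There is one bookkeeping slip in your final paragraph. After deleting $w$, the vertex $v$ does \emph{not} ``join a pendant edge'': its only neighbour in $D$ was $w$, which has just been removed, so $v$ is isolated in $G[D']$. Likewise $w'$ loses its unique $D$-neighbour $w$ and becomes isolated. Thus $p$ increases by $2$, not $1$; your stated lower bound ``net gain of at least one isolated vertex'' is still true, but the justification you wrote for it mixes the two moves (deleting $w$ versus deleting $w'$). Once you fix this, the argument is complete and coincides with the paper's.
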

\begin{proof}
Suppose, for a contradiction, 
that some vertex $u$ in $R=V(G) \setminus D$ has two neighbors in $R$.
The choice of $D$ implies that $u$ has a neighbor $v$ in $D$.
Since $G$ is subcubic, the vertex $v$ is the only neighbor of $u$ in $D$,
and the choice of $D$ further implies that $v$ has a unique neighbor $w$ in $D$.
Now, replacing $v$ by $u$ within $D$
yields a maximum dissociation set $\bar{D}$ 
with more than $p$ isolated vertices in $G[\bar{D}]$,
contradicting the choice of $D$, and completing the proof.
\end{proof}
The following proof of Theorem \ref{theorem1} is set up in a way
that allows to extract several structural features of the extremal graphs.

\begin{proof}[Proof of Theorem \ref{theorem1}]
Let $G$ be a connected cubic graph of order $n$ at least $6$.
Let $D$ be a maximum dissociation set of $G$ 
that maximizes the number $p$ of isolated vertices in $G[D]$.
Let $G[D]$ contain $q$ components of order $2$.
By Lemma \ref{lemmaDiss}, 
the set $\bar{D} = V(G) \setminus D$ is also a dissociation set.
Let $G[\bar{D}]$ contain $r$ isolated vertices and $s$ components of order $2$,
in particular, we have $n=p+2q+r+2s$. 
Counting the edges between $D$ and $\bar{D}$, we obtain $3p+4q = 3r+4s$.
By the choice of $G$ and the structure of $G[\bar{D}]$, 
we obtain 
${\rm diss}(G)=p +2q$
and
$\alpha(G)\geq r+s$. 
Now,
\begin{eqnarray}\label{ethm1}
n &=& p+2q+r+2s
=3(p + 2q)+ p-2(r+s)
\geq 3{\rm diss}(G)+p-2\alpha(G).
\end{eqnarray}
Now, suppose that $\frac{\alpha(G)}{{\rm diss}(G)} \leq \frac{3}{5}$.
By Brooks' theorem \cite{br}, we have $\alpha (G)\geq n/3$, 
and (\ref{ethm1}) implies
$$3\alpha (G)\geq n\geq 3{\rm diss}(G)+p-2\alpha(G)
\geq 3\alpha (G),$$
where the last inequality uses $\frac{\alpha(G)}{{\rm diss}(G)} \leq \frac{3}{5}$.
It follows that equality holds throughout this inequality chain. 
This implies 
$\alpha(G) =n/3$, 
$p = 0$, 
$r+s = \alpha(G)$, and 
$2q = {\rm diss}(G) = \frac{5}{3}\alpha(G) = \frac{5}{9}n$.
Furthermore, using $n = p+2q+r+s+s$, 
we obtain $s= \frac{n}{9}$ and $r=\frac{2n}{9}$.
\end{proof}
Theorem \ref{conjecture1} concerning the extremal graphs for Theorem \ref{theorem1}
requires quite some technical preparation, 
which is why we postpone it to the end of this section.
If $G$ is a triangle-free cubic graph of order $n$,
then $\alpha(G)\geq \frac{5}{14}n$ \cite{heth,st}.
In this case
(\ref{ethm1}) implies 
$\frac{14}{5}\alpha(G)\geq 3{\rm diss}(G)-2\alpha(G)$
and, hence,
$$\alpha(G)\geq \frac{5}{8}{\rm diss}(G).$$
We proceed to the proof of the characterization 
of the extremal subcubic graphs for (\ref{e0-1}).

\begin{proof}[Proof of Theorem \ref{theorem3}]
Let $G$ belong to ${\cal G}$.
If $G=K_4$, then $2\alpha(G)={\rm diss}(G)=2$.
Now, let $G\not=K_4$.
Let $G$ arise from 
$n_2$ copies of $K_2$,
$n_3$ copies of $K_3$, and
$n_4$ copies of $K_4^*$
by adding additional edges, each incident with at most one marked vertex.
Since $D(G)$ is a dissociation set by construction, 
we obtain $2\alpha(G)\geq {\rm diss}(G)\geq |D(G)|=2n_2+2n_3+4n_4$.
Since every independent set in $G$ contains at most one vertex from each $K_2$ and $K_3$ 
as well as at most $2$ vertices from each $K_4^*$,
we obtain $\alpha(G)\leq n_2+n_3+2n_4$.
It follows that $2\alpha(G)={\rm diss}(G)$;
in particular, the set $D(G)$ is a maximum dissociation set.

For the converse, let $G$ be a connected subcubic graph with $2\alpha(G)={\rm diss}(G)$
that is distinct from $K_4$.
Let $D$ be a maximum dissociation set in $G$.
Since $2\alpha(G)={\rm diss}(G)$,
the graph $G[D]$ consists of $\alpha(G)$ copies of $K_2$.
If some vertex $u$ in $R=V(G)\setminus D$ has at most one neighbor in each $K_2$ in $G[D]$,
then $G$ has an independent set containing $u$ as well as one vertex from each $K_2$ in $G[D]$,
which implies the contradiction $\alpha(G)>{\rm diss}(G)/2$.
Hence, for every vertex $u$ in $R$, there is a $K_2$ component in $G[D]$, say $K(u)$, 
such that $u$ is adjacent to both vertices of $K(u)$.
Since $G$ is subcubic, the $K_2$ component $K(u)$ is uniquely determined for every $u$ in $R$,
and, for every $K_2$ component $K$ in $G[D]$, there are at most two vertices $u$ in $R$ with $K=K(u)$.

Suppose that $K(u)=K(v)$ for two distinct vertices $u$ and $v$ in $R$.
Since $G$ is not $K_4$, the vertices $u$ and $v$ are not adjacent.
If every $K_2$ component of $G[D]$ that is distinct from $K(u)$ 
contains a vertex that is adjacent neither to $u$ nor to $v$,
then $G$ has an independent set containing $u$ and $v$ as well as one vertex from each $K_2$ in $G[D]$
that is distinct from $K(u)$, which implies the contradiction $\alpha(G)>{\rm diss}(G)/2$.
Since $G$ is subcubic, 
it follows that there is a unique $K_2$ component in $G[D]$ that is distinct from $K(u)$, say $K(\{ u,v\})$,
such that $u$ is adjacent to one vertex of $K(\{ u,v\})$ and $v$ is adjacent to the other vertex of $K(\{ u,v\})$.
If there is some vertex $w$ in $R$ with $K(w)=K(\{ u,v\})$, 
then $G$ has an independent set containing $u$, $v$, and $w$ 
as well as one vertex from each $K_2$ in $G[D]$ that is distinct from $K(u)$ and $K(w)$, 
which implies the contradiction $\alpha(G)>{\rm diss}(G)/2$.
Hence, there is no vertex $w$ in $R$ with $K(w)=K(\{ u,v\})$.
If there are two distinct vertices $u'$ and $v'$ in $R\setminus \{ u,v\}$ with $K(\{ u,v\})=K(\{ u',v'\})$,
then $G$ has an independent set containing $u$, $v$, $u'$, and $v'$ 
as well as one vertex from each $K_2$ in $G[D]$ that is distinct from $K(u)$, $K(u')$, and $K(\{ u,v\})$, 
which implies the contradiction $\alpha(G)>{\rm diss}(G)/2$.
Hence, there are no two distinct vertices $u'$ and $v'$ in $R\setminus \{ u,v\}$ with $K(\{ u,v\})=K(\{ u',v'\})$.
Note that $\{ u,v\}\cup K(u)\cup K(\{ u,v\})$ induces a $K_4^*$ in $G$.

We are now in a position to describe a decomposition of $G$ into disjoint copies of $K_2$, $K_3$, and $K_4^*$
as in the definition of ${\cal G}$:
\begin{itemize}
\item For every $K_2$ component $K$ in $G[D]$ with $K=K(u)=K(v)$ for two distinct vertices $u$ and $v$ in $R$,
the set $\{ u,v\}\cup V(K)\cup K(\{ u,v\})$ induces a $K_4^*$ in $G$,
which will be part of the decomposition.
\item For every $K_2$ component $K$ in $G[D]$ with $K=K(u)$ for exactly one vertex $u$ in $R$,
the set $\{ u\}\cup V(K)$ induces a $K_3$ in $G$,
which will be part of the decomposition.
\item For every $K_2$ component $K$ in $G[D]$ 
such that there is no vertex $u$ in $R$ with $K=K(u)$
and there are no two distinct vertices $u$ and $v$ in $R$ with $K=K(\{ u,v\})$,
the set $V(K)$ induces a $K_2$ in $G$,
which will be part of the decomposition.
\end{itemize}
The above observations imply that all described copies of $K_2$, $K_3$, and $K_4^*$ are disjoint.
Furthermore, since $D$ is a dissociation set, all additional edges of $G$ are incident with at most one vertex in $D$.
Since the vertices in $D$ correspond to the marked vertices in the definition of ${\cal G}$,
we obtain that $G$ belongs to ${\cal G}$,
which completes the proof.
\end{proof}
Our best possible result for bipartite graphs has quite a simple proof.

\begin{proof}[Proof of Proposition \ref{proposition1}]
Let $n$ be the order of $G$.
Let $D$ be a maximum dissociation set in $G$.
Let $G[D]$ contain $p$ isolated vertices and $q$ components of order $2$,
that is, ${\rm diss}(G)=p+2q$.
The number $m$ of edges incident with the vertices in $R=V(G)\setminus D$ satisfies $m\leq \Delta|R|=\Delta(n-p-2q)$.
Removing these $m$ edges results in a graph with at most $1+m$ components:
$p+(n-p-2q)$ isolated vertices and $q$ components of order $2$.
We obtain
\begin{eqnarray}\label{etree}
1+\Delta(n-p-2q)\geq 1+m\geq p+(n-p-2q)+q=n-q,
\end{eqnarray}
which implies
$$\alpha(G)\geq \frac{n}{2}\stackrel{(\ref{etree})}{\geq} \frac{\Delta}{2(\Delta-1)}p+\frac{2\Delta-1}{4(\Delta-1)}2q-\frac{1}{2(\Delta-1)}
\geq \frac{2\Delta-1}{4(\Delta-1)}{\rm diss}(G)-\frac{1}{2(\Delta-1)}.$$
\end{proof}

The following proof of our main result about triangle-free graphs
is based on a probabilistic argument,
and recycles ideas from the probabilistic folklore proof
of the Caro-Wei bound on the independence number.

\begin{proof}[Proof of Theorem \ref{theorem2}]
Let $D$ be a maximum dissociation set in $G$.
Let $D$ contain $p$ isolated vertices and $q$ components of order $2$.
For $i\in \{ 0,1\}$, 
let $D_i$ be the set of vertices of degree $i$ in $G[D]$.
For $i\in \{ 0,1,\ldots,\Delta\}$,
let $R_i$ be the set of vertices in $V(G)\setminus D$ 
that have exactly $i$ neighbors in $D_1$,
and let $r_i=|R_i|$.
Since $G$ is $\Delta$-regular, we have
\begin{eqnarray}\label{ethm21}
\sum\limits_{i=1}^\Delta ir_i&=&2(\Delta-1)q.
\end{eqnarray}
Adding to $D_0$ one vertex from each of the $q$ components of $G[D_1]$
yields an independent set in $G$, which implies
\begin{eqnarray}\label{ethm22}
\alpha(G) &\geq & p+q={\rm diss}(G)-q.
\end{eqnarray}
We now construct another independent set by the following random procedure:
For every $K_2$ component in $G[D_1]$,
we select independently one of its two vertices with probability $1/2$.
Let $I_1$ be the set of selected vertices.
Let $\pi$ be a linear ordering of the vertices in $V(G)\setminus D_1$
chosen uniformly at random.
Let $I_2$ be the set of all vertices $u$ in $V(G)\setminus D_1$ such that
\begin{itemize}
\item if $u\in D_0$, then $u$ appears within $\pi$ before all its neighbors, and
\item if $u\in V(G)\setminus D$, then 
$u$ appears within $\pi$ before all its neighbors within $V(G)\setminus D_1$,
and $u$ has no neighbor in $I_1$.
\end{itemize}
Note that
$$\mathbb{P}[u\in I_2]=
\begin{cases}
\frac{1}{\Delta+1}, & \mbox{ if $u\in D_0$, and}\\
\frac{1}{2^i(\Delta-i+1)}, & \mbox{ if $u\in R_i$ for some $i\in \{ 0,1,\ldots,\Delta\}$}.
\end{cases}$$
By linearity of expectation,
$$\mathbb{E}[|I_2|]
=\frac{p+r_0}{\Delta+1}+\sum_{i=1}^\Delta \frac{r_i}{2^i(\Delta-i+1)}
=\frac{p+r_0}{\Delta+1}+\sum_{i=1}^\Delta \frac{ir_i}{2^i i (\Delta-i+1)}.$$ 
Since 
$$\max\big\{2^i i (\Delta-i+1): i\in \{ 1,\ldots,\Delta\}\big\}=2^\Delta\Delta,$$
we obtain, using (\ref{ethm21}), that 
$$\mathbb{E}[|I_2|]\geq 
\frac{p}{\Delta+1}+\frac{2(\Delta-1)q}{2^\Delta \Delta}.$$
Altogether, considering the independent set $I_1\cup I_2$,
and using the first moment method, we obtain
\begin{eqnarray}\label{ethm23}
\alpha(G) &\geq & q+\frac{p}{\Delta+1}+\frac{2(\Delta-1)q}{2^\Delta \Delta}
=\frac{1}{\Delta+1}{\rm diss}(G)+\left(1+\frac{2(\Delta-1)}{2^\Delta \Delta}-\frac{2}{\Delta+1}\right)q.
\end{eqnarray}
Equating the two lower bounds (\ref{ethm22}) and (\ref{ethm23}) yields 
$$q=\frac{2^{\Delta-1}\Delta^2}{2^{\Delta}\Delta^2+(\Delta-1)(\Delta+1)}{\rm diss}(G)
=\frac{1}{2}\left(1-\frac{(\Delta-1)(\Delta+1)}{2^{\Delta}\Delta^2+(\Delta-1)(\Delta+1)}\right){\rm diss}(G).$$
Since the two bounds (\ref{ethm22}) and (\ref{ethm23}) are decreasing and increasing in $q$, respectively, we obtain
\begin{eqnarray*}
\alpha(G) & \geq & {\rm diss}(G)-\frac{1}{2}\left(1-\frac{(\Delta-1)(\Delta+1)}{2^{\Delta}\Delta^2+(\Delta-1)(\Delta+1)}\right){\rm diss}(G)\\
& = & \frac{1}{2}\left(1+\frac{(\Delta-1)(\Delta+1)}{2^{\Delta}\Delta^2+(\Delta-1)(\Delta+1)}\right){\rm diss}(G),
\end{eqnarray*}
which completes the proof.
\end{proof}
We proceed to Theorem \ref{conjecture1},
which describes the structure of the extremal graphs for Theorem \ref{theorem1}.

Let $k$ be a positive integer.
Let ${\cal H}_k$ be the set of all connected subcubic multigraphs $H$ 
with the following properties:
\begin{itemize} 
\item $H$ may contain parallel edges but no loops.
\item $3n_1+2n_2+n_3=6k$, where $n_i$ is the number of vertices of $H$ of degree $i$,
and the degree of a vertex in $H$ is the number of incident edges.
\item $H$ has an induced matching $M$ of size $k$ that covers all vertices of degree $1$.
\item $H-M$ has an orientation $\overrightarrow{H-M}$ such that every vertex $u$ of $H$
that is not incident with an edge in $M$ has exactly two outgoing edges.
\end{itemize}
${\cal H}_k$ is quite a rich family of multigraphs; 
starting with an integral solution of $3n_1+2n_2+n_3=6k$,
the construction of elements of ${\cal H}_k$ is simple.
 
Let ${\cal G}_k$ be the set of all graphs $G$ that arise from a graph $H$ in ${\cal H}_k$ by 
\begin{itemize} 
\item replacing every vertex of $H$ of degree $3$ with a triangle as illustrated in Figure \ref{figd3},

\begin{figure}[H]
\begin{center}
\unitlength 0.6mm 
\linethickness{0.4pt}
\ifx\plotpoint\undefined\newsavebox{\plotpoint}\fi 
\begin{picture}(98,43)(0,0)
\put(64,11){\circle*{2}}
\put(84,11){\circle*{2}}
\put(74,26){\circle*{2}}
\put(74,26){\line(-2,-3){10}}
\put(64,11){\line(1,0){20}}
\put(84,11){\line(-2,3){10}}
\put(14,21){\circle*{2}}
\multiput(0,10)(.0428134557,.0336391437){327}{\line(1,0){.0428134557}}
\multiput(50,0)(.0428134557,.0336391437){327}{\line(1,0){.0428134557}}
\multiput(28,10)(-.0428134557,.0336391437){327}{\line(-1,0){.0428134557}}
\multiput(98,0)(-.0428134557,.0336391437){327}{\line(-1,0){.0428134557}}
\put(14,21){\line(0,1){17}}
\put(74,26){\line(0,1){17}}
\put(34,21){\vector(1,0){20}}
\end{picture}
\end{center}
\caption{Replacing a vertex of degree $3$.}\label{figd3}
\end{figure}
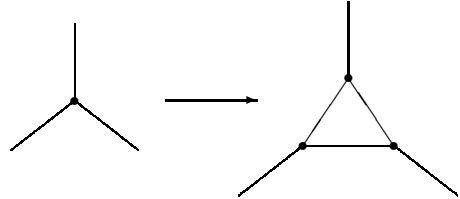

\item replacing every vertex of $H$ of degree $2$ with a $K_4^*$ as illustrated in Figure \ref{figd2}, and

\begin{figure}[H]
\begin{center}
\unitlength 0.6mm 
\linethickness{0.4pt}
\ifx\plotpoint\undefined\newsavebox{\plotpoint}\fi 
\begin{picture}(125,42)(0,0)
\put(95,0){\circle*{2}}
\put(95,20){\circle*{2}}
\put(95,40){\circle*{2}}
\put(115,0){\circle*{2}}
\put(115,20){\circle*{2}}
\put(115,40){\circle*{2}}
\put(95,0){\line(1,0){20}}
\put(95,40){\line(1,0){20}}
\put(15,20){\line(1,0){20}}
\put(95,40){\line(0,-1){40}}
\put(95,0){\line(1,1){20}}
\put(115,20){\line(0,-1){20}}
\put(115,0){\line(-1,1){20}}
\put(115,40){\line(0,-1){20}}
\put(115,40){\line(1,0){10}}
\put(35,20){\line(1,0){10}}
\put(95,40){\line(-1,0){10}}
\put(15,20){\line(-1,0){10}}
\put(25,20){\circle*{2}}
\put(55,20){\vector(1,0){20}}
\put(95,0){\circle{4}}
\put(115,0){\circle{4}}
\end{picture}
\end{center}
\caption{Replacing a vertex of degree $2$.}\label{figd2}
\end{figure}
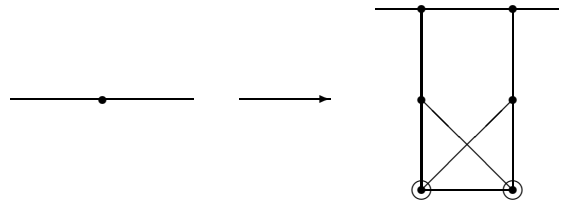

\item replacing every vertex of $H$ of degree $1$ with one of the two graphs of order $9$
as illustrated in Figure \ref{figd1}.

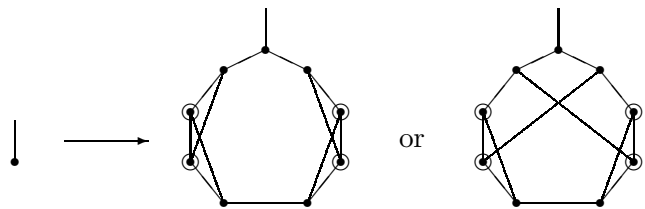
\begin{figure}[H]
\begin{center}
\unitlength .55mm 
\linethickness{0.4pt}
\ifx\plotpoint\undefined\newsavebox{\plotpoint}\fi 
\begin{picture}(150,47)(0,0)
\put(50,0){\circle*{2}}
\put(120,0){\circle*{2}}
\put(70,0){\circle*{2}}
\put(140,0){\circle*{2}}
\put(50,0){\line(1,0){20}}
\put(120,0){\line(1,0){20}}
\put(42,10){\circle*{2}}
\put(112,10){\circle*{2}}
\put(78,10){\circle*{2}}
\put(148,10){\circle*{2}}
\put(42,22){\circle*{2}}
\put(112,22){\circle*{2}}
\put(78,22){\circle*{2}}
\put(148,22){\circle*{2}}
\put(50,32){\circle*{2}}
\put(120,32){\circle*{2}}
\put(70,32){\circle*{2}}
\put(140,32){\circle*{2}}
\put(60,37){\circle*{2}}
\put(0,10){\circle*{2}}
\put(130,37){\circle*{2}}
\put(60,37){\line(0,1){10}}
\put(0,10){\line(0,1){10}}
\put(130,37){\line(0,1){10}}
\put(60,37){\line(-2,-1){10}}
\put(130,37){\line(-2,-1){10}}
\put(50,32){\line(-4,-5){8}}
\put(120,32){\line(-4,-5){8}}
\put(42,22){\line(0,-1){12}}
\put(112,22){\line(0,-1){12}}
\put(42,10){\line(4,-5){8}}
\put(112,10){\line(4,-5){8}}
\put(60,37){\line(2,-1){10}}
\put(130,37){\line(2,-1){10}}
\put(70,32){\line(4,-5){8}}
\put(140,32){\line(4,-5){8}}
\put(78,22){\line(0,-1){12}}
\put(148,22){\line(0,-1){12}}
\put(78,10){\line(-4,-5){8}}
\put(148,10){\line(-4,-5){8}}
\multiput(42,22)(.055944056,-.153846154){143}{\line(0,-1){.153846154}}
\multiput(112,22)(.055944056,-.153846154){143}{\line(0,-1){.153846154}}
\multiput(78,22)(-.055944056,-.153846154){143}{\line(0,-1){.153846154}}
\multiput(148,22)(-.055944056,-.153846154){143}{\line(0,-1){.153846154}}
\multiput(78,10)(-.055944056,.153846154){143}{\line(0,1){.153846154}}
\multiput(50,32)(-.055944056,-.153846154){143}{\line(0,-1){.153846154}}
\multiput(120,32)(.0714285714,-.056122449){392}{\line(1,0){.0714285714}}
\multiput(140,32)(-.0714285714,-.056122449){392}{\line(-1,0){.0714285714}}
\put(42,22){\circle{4}}
\put(112,22){\circle{4}}
\put(42,10){\circle{4}}
\put(112,10){\circle{4}}
\put(78,22){\circle{4}}
\put(148,22){\circle{4}}
\put(78,10){\circle{4}}
\put(148,10){\circle{4}}
\put(95,15){\makebox(0,0)[cc]{or}}
\put(12,15){\vector(1,0){20}}
\end{picture}
\end{center}
\caption{Replacing a vertex of degree $1$.}\label{figd1}
\end{figure}
\end{itemize} 
The graph $G$ in Figure \ref{fig3} belongs to ${\cal G}_1$;
the corresponding multigraph $H$ is obtained by contracting the six triangles,
the matching $M$ consists of the edge between the two central triangles,
and the orientation $\overrightarrow{H-M}$ 
is shown on the corresponding eight edges of $G$.
Another example is shown in Figure \ref{figl}.

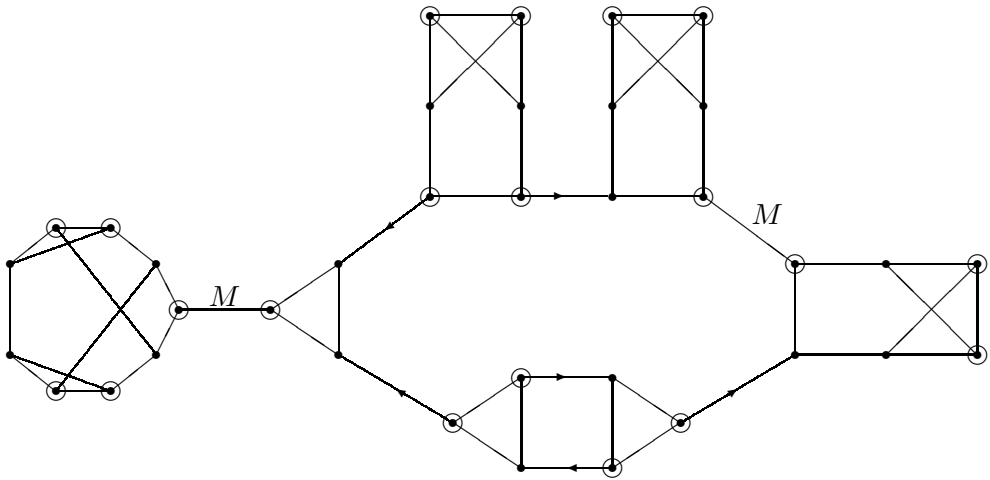
\begin{figure}[h]
\begin{center}
\unitlength .6mm 
\linethickness{0.4pt}
\ifx\plotpoint\undefined\newsavebox{\plotpoint}\fi 
\begin{picture}(214,102)(0,0)
\put(0,45){\circle*{2}}
\put(0,25){\circle*{2}}
\put(0,45){\line(0,-1){20}}
\put(10,53){\circle*{2}}
\put(10,17){\circle*{2}}
\put(22,53){\circle*{2}}
\put(22,17){\circle*{2}}
\put(32,45){\circle*{2}}
\put(32,25){\circle*{2}}
\put(37,35){\circle*{2}}
\put(37,35){\line(-1,2){5}}
\put(32,45){\line(-5,4){10}}
\put(22,53){\line(-1,0){12}}
\put(10,53){\line(-5,-4){10}}
\put(37,35){\line(-1,-2){5}}
\put(32,25){\line(-5,-4){10}}
\put(22,17){\line(-1,0){12}}
\put(10,17){\line(-5,4){10}}
\multiput(22,53)(-.153846154,-.055944056){143}{\line(-1,0){.153846154}}
\multiput(22,17)(-.153846154,.055944056){143}{\line(-1,0){.153846154}}
\multiput(32,45)(-.056122449,-.0714285714){392}{\line(0,-1){.0714285714}}
\multiput(32,25)(-.056122449,.0714285714){392}{\line(0,1){.0714285714}}
\put(22,53){\circle{4}}
\put(10,53){\circle{4}}
\put(22,17){\circle{4}}
\put(10,17){\circle{4}}
\put(212,25){\circle*{2}}
\put(92,100){\circle*{2}}
\put(132,100){\circle*{2}}
\put(192,25){\circle*{2}}
\put(92,80){\circle*{2}}
\put(132,80){\circle*{2}}
\put(172,25){\circle*{2}}
\put(92,60){\circle*{2}}
\put(132,60){\circle*{2}}
\put(212,45){\circle*{2}}
\put(112,100){\circle*{2}}
\put(152,100){\circle*{2}}
\put(192,45){\circle*{2}}
\put(112,80){\circle*{2}}
\put(152,80){\circle*{2}}
\put(172,45){\circle*{2}}
\put(112,60){\circle*{2}}
\put(152,60){\circle*{2}}
\put(212,25){\line(0,1){20}}
\put(92,100){\line(1,0){20}}
\put(132,100){\line(1,0){20}}
\put(172,25){\line(0,1){20}}
\put(92,60){\line(1,0){20}}
\put(132,60){\line(1,0){20}}
\put(172,25){\line(1,0){40}}
\put(92,60){\line(0,1){40}}
\put(132,60){\line(0,1){40}}
\put(212,25){\line(-1,1){20}}
\put(92,100){\line(1,-1){20}}
\put(132,100){\line(1,-1){20}}
\put(192,45){\line(1,0){20}}
\put(112,80){\line(0,1){20}}
\put(152,80){\line(0,1){20}}
\put(212,45){\line(-1,-1){20}}
\put(112,100){\line(-1,-1){20}}
\put(152,100){\line(-1,-1){20}}
\put(172,45){\line(1,0){20}}
\put(112,60){\line(0,1){20}}
\put(152,60){\line(0,1){20}}
\put(212,25){\circle{4}}
\put(92,100){\circle{4}}
\put(132,100){\circle{4}}
\put(212,45){\circle{4}}
\put(112,100){\circle{4}}
\put(152,100){\circle{4}}
\put(92,60){\circle{4}}
\put(172,45){\circle{4}}
\put(112,60){\circle{4}}
\put(152,60){\circle{4}}
\put(37,35){\circle{4}}
\put(72,25){\circle*{2}}
\put(112,0){\circle*{2}}
\put(132,0){\circle*{2}}
\put(72,45){\circle*{2}}
\put(112,20){\circle*{2}}
\put(132,20){\circle*{2}}
\put(57,35){\circle*{2}}
\put(97,10){\circle*{2}}
\put(147,10){\circle*{2}}
\put(57,35){\line(3,-2){15}}
\put(97,10){\line(3,-2){15}}
\put(147,10){\line(-3,-2){15}}
\put(72,25){\line(0,1){20}}
\put(112,0){\line(0,1){20}}
\put(132,0){\line(0,1){20}}
\put(72,45){\line(-3,-2){15}}
\put(112,20){\line(-3,-2){15}}
\put(132,20){\line(3,-2){15}}
\put(132,0){\circle{4}}
\put(112,20){\circle{4}}
\put(82,52.5){\vector(-4,-3){.117}}\multiput(92,60)(-.074906367,-.0561797753){267}{\line(-1,0){.074906367}}
\put(152,60){\line(4,-3){20}}
\put(121.5,60){\vector(1,0){.117}}\put(112,60){\line(1,0){19}}
\put(97,10){\circle{4}}
\put(147,10){\circle{4}}
\put(122,20){\vector(1,0){.117}}\put(112,20){\line(1,0){20}}
\put(122,0){\vector(-1,0){.117}}\put(132,0){\line(-1,0){20}}
\put(37,35){\line(1,0){20}}
\put(57,35){\circle{4}}
\put(159.5,17.5){\vector(3,2){.117}}\multiput(147,10)(.0936329588,.0561797753){267}{\line(1,0){.0936329588}}
\put(84.5,17.5){\vector(-3,2){.117}}\multiput(97,10)(-.0936329588,.0561797753){267}{\line(-1,0){.0936329588}}
\put(47,38){\makebox(0,0)[cc]{$M$}}
\put(166,56){\makebox(0,0)[cc]{$M$}}
\end{picture}
\end{center}
\caption{A graph in ${\cal G}_2$ with $n_1=1$, $n_2=3$, and $n_3=3$.}\label{figl}
\end{figure}

\begin{lemma}\label{lemma1}
For every positive integer $k$, every graph $G$ in ${\cal G}_k$
is connected, cubic, and satisfies $\alpha(G)=6k$ and ${\rm diss}(G)=10k$.
\end{lemma}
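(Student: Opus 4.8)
The plan is to read off the elementary structural facts directly from the gadget construction, to exhibit an explicit dissociation set of size $10k$, to bound $\alpha(G)$ from above gadget by gadget, and then to let Theorem~\ref{theorem1} pin down both parameters exactly. First I would dispose of the easy part: each of the three replacements is local, turning a vertex of degree $d\in\{1,2,3\}$ of $H$ into a connected graph on $9$, $6$, or $3$ vertices in which the $d$ \emph{ports} (the vertices receiving the ends of the edges formerly incident with the replaced vertex) have internal degree $2$ while every other vertex has internal degree $3$; one verifies this on the three pictures. Consequently $G$ is cubic, $G$ is connected because $H$ and every gadget are connected, and $|V(G)|=9n_1+6n_2+3n_3=3(3n_1+2n_2+n_3)=18k$. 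In particular $G$ has order $18k\ge 18>4$, so $G\neq K_4$ and Theorem~\ref{theorem1} applies to $G$.

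The heart of the proof is the construction of a dissociation set $D$ with $|D|=10k$; I would build it so that $G[D]$ is a disjoint union of exactly $5k$ copies of $K_2$, the $k$ edges of $M$ accounting for $k$ of these (each running across the boundary between two gadgets) and the gadget interiors, guided by the orientation $\overrightarrow{H-M}$, accounting for the remaining $4k$. Concretely, along every edge of $H-M$ oriented from $u$ to $w$ one puts the corresponding port of the gadget of $u$ into $D$ and keeps the corresponding port of the gadget of $w$ out of $D$; this is consistent since each edge carries exactly one orientation. For a triangle replacing a non-$M$ vertex, the condition ``exactly two outgoing edges'' places exactly two of its three ports in $D$, and these two are adjacent and form a $K_2$; for a $K_4^*$ replacing a non-$M$ vertex both of its ports lie in $D$, are adjacent, and together with the two degree-$3$ vertices of $K_4^*$ nonadjacent to a port they form two disjoint $K_2$'s; for the two gadgets at the ends of an edge of $M$ one puts both ports of that $M$-edge into $D$ and completes these gadgets so that neither $M$-port gains a further neighbour in $D$, using crucially that $M$ is an \emph{induced} matching, so the other ports of an $M$-incident gadget lie on non-$M$ edges whose far endpoint is a non-$M$ vertex; the order-$9$ gadget of a degree-$1$ vertex (always $M$-incident) is handled the same way. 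A case check over the three gadget types and over $M$-incidence then shows $G[D]$ has exactly $5k$ components, all copies of $K_2$, whence ${\rm diss}(G)\ge 10k$.

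For the upper bound on $\alpha$ I would use subadditivity over the gadgets: since each gadget is an induced subgraph of $G$, any independent set of $G$ meets each gadget in an independent set of that gadget, and one checks on the small gadgets that $\alpha(K_3)=1$, that $\alpha(K_4^*)=2$ (for instance $\overline{K_4^*}$ is triangle-free), and that each of the two order-$9$ gadgets has independence number $3$. Hence $\alpha(G)\le n_3+2n_2+3n_1=6k$. Theorem~\ref{theorem1} now gives $6k\ge\alpha(G)\ge\tfrac{3}{5}{\rm diss}(G)\ge\tfrac{3}{5}\cdot 10k=6k$, so equality holds throughout, yielding $\alpha(G)=6k$ and ${\rm diss}(G)=10k$.

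The main obstacle is the dissociation-set construction: one must choose (or, if necessary, slightly adjust) the orientation $\overrightarrow{H-M}$ together with the local completions inside the gadgets so that they are globally compatible and no vertex of $D$ ends up with two neighbours in $D$; the delicate point is the vertices incident with $M$, whose non-$M$ edges are not constrained by the orientation condition, and the induced-matching hypothesis on $M$ is exactly what keeps this under control. Everything else is routine: finite checks on the three small gadgets for cubicity and for $\alpha(K_3)$, $\alpha(K_4^*)$, and the independence number of the order-$9$ gadgets, together with the one-line appeal to Theorem~\ref{theorem1}.
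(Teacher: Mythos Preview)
Your proposal is correct and follows essentially the same route as the paper: bound $\alpha(G)$ from above gadget by gadget (using $\alpha(K_3)=1$, $\alpha(K_4^*)=2$, and independence number $3$ for the order-$9$ gadgets), build an explicit dissociation set of size $10k$ from the matching $M$ and the orientation $\overrightarrow{H-M}$, and then invoke Theorem~\ref{theorem1} to pin down both parameters.

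The one point on which you express uncertainty --- whether the given orientation might need adjustment at $M$-incident vertices --- dissolves by a simple edge count: the number of edges of $H-M$ is $\frac{1}{2}(n_1+2n_2+3n_3)-k$, and the out-degrees of the $n_1+n_2+n_3-2k$ non-$M$ vertices already sum to $2(n_1+n_2+n_3-2k)$, which equals that edge count precisely because $3n_1+2n_2+n_3=6k$. Hence every $M$-incident vertex of $H$ is forced to have out-degree~$0$ in $\overrightarrow{H-M}$, so the non-$M$ ports of every $M$-incident gadget are automatically excluded from $D$ under your orientation rule, and no adjustment is ever needed. This is exactly the computation hidden in the paper's count ``$2(n_1+n_2+n_3-2k)$ further vertices'' in step~(ii).
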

\begin{proof}
The construction of the graphs in ${\cal G}_k$ immediately implies 
that $G$ is connected and cubic.
Since $\alpha(K_3)=1$, $\alpha(K_4^*)=2$, and the independence number of 
the two graphs replacing vertices of degree $1$ shown in Figure \ref{figd1} is $3$,
we obtain $\alpha(G)\leq 3n_1+2n_2+n_3=6k$.
Now, we describe a dissociation set $D$ in $G$:
\begin{enumerate}[(i)]
\item The $k$ edges in $M$ correspond in an obvious way to $k$ edges in $G$,
and $D$ contains all $2k$ vertices incident with these edges;
cf.~the two encircled vertices in the two central triangles in Figure \ref{fig3}.
\item The edges in $E(H)\setminus M$ correspond in an obvious way to edges in $G$.
We orient these edges exactly as in $\overrightarrow{H-M}$,
and let $D$ contain the vertices of $G$ that have an outgoing oriented edge.
This yields $2(n_1+n_2+n_3-2k)$ further vertices in $D$,
cf.~the eight oriented edges in Figure \ref{fig3}.
\item For every vertex $u$ of degree $2$ in $H$, 
the set $D$ contains the two vertices of the corresponding copy of $K_4^*$
encircled in Figure \ref{figd2}.
This yields $2n_2$ further vertices in $D$.
\item For every vertex $u$ of degree $1$ in $H$, 
the set $D$ contains the four vertices of the corresponding subgraph 
encircled in Figure \ref{figd1}.
This yields $4n_1$ further vertices in $D$.
\end{enumerate}
The construction implies that $D$ is a dissociation set in $G$, and
$$|D|
=2k+2(n_1+n_2+n_3-2k)+2n_2+4n_1
=6n_1+4n_2+2n_3-2k
=10k.$$
By Theorem \ref{theorem1},
we have 
$6k\geq \alpha(G)\geq \frac{3}{5}{\rm diss}(G)\geq \frac{3}{5}|D|=6k$,
which implies $\alpha(G)=6k$ and ${\rm diss}(G)=10k$.
\end{proof}
We proceed to our final main result, 
the characterization of the extremal graphs for Theorem \ref{theorem1}.

\begin{theorem}\label{conjecture1}
A connected cubic graph $G$ satisfies $\alpha(G)=\frac{3}{5}{\rm diss}(G)$
if and only if $G\in \bigcup\limits_{k\in\mathbb{N}}{\cal G}_k$.
\end{theorem}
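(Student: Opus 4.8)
The \emph{if} direction is exactly Lemma~\ref{lemma1}. For the \emph{only if} direction, the plan is to bootstrap the structural information extracted in the proof of Theorem~\ref{theorem1} into a full gadget decomposition. So let $G$ be a connected cubic graph of order $n$ with $\alpha(G)=\frac35{\rm diss}(G)$, and fix a maximum dissociation set $D$ maximizing the number $p$ of isolated vertices of $G[D]$. The proof of Theorem~\ref{theorem1} forces $p=0$, so $G[D]$ is a disjoint union of $q=\frac{5n}{18}$ edges; moreover $\bar D=V(G)\setminus D$ is a dissociation set (Lemma~\ref{lemmaDiss}), and the equality chain forces $G[\bar D]$ to consist of a set $R^{\mathrm{iso}}$ of $r=\frac{2n}{9}$ isolated vertices together with $s=\frac n9$ edges, while $\alpha(G)=r+s=\frac n3$. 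Since $\bar D$ is a dissociation set, every vertex of $R^{\mathrm{iso}}$ has all three neighbors in $D$, and no two vertices lying in distinct $\bar D$-edges are adjacent; hence $R^{\mathrm{iso}}$ together with any selection of one endpoint from each $\bar D$-edge is a maximum independent set.

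Next I would read off the local constraints forced by the fact that no independent set exceeds $\frac n3$. If a $D$-vertex $x$ had no neighbor in $R^{\mathrm{iso}}$ and its two $\bar D$-neighbors lay in two distinct $\bar D$-edges, then adjoining $x$ and the two ``other'' endpoints of those edges to a maximum independent set would give an independent set of size $\frac n3+1$, a contradiction. Hence every $D$-vertex is of exactly one type: (A) both $\bar D$-neighbors in $R^{\mathrm{iso}}$; (B) one $\bar D$-neighbor in $R^{\mathrm{iso}}$ and one in an $\bar D$-edge; (C) the two $\bar D$-neighbors are the endpoints of a common $\bar D$-edge (so $x$ spans a triangle with that edge). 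Counting the edges between $D$ and $R^{\mathrm{iso}}$ and between $D$ and the $\bar D$-edges pins the number of vertices of each type down to a single free parameter, and a battery of further exchange arguments of the same flavor --- moving a vertex into or out of $R^{\mathrm{iso}}$, re-selecting several $\bar D$-edge representatives at once, or combining such moves around one $D$-edge --- eliminates the remaining unwanted adjacency patterns, such as a $D$-edge whose endpoints see $R^{\mathrm{iso}}$ incompatibly, or an $R^{\mathrm{iso}}$-vertex whose three $D$-neighbors lie in three distinct $D$-edges.

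With the admissible local pictures in hand, I would then show that $V(G)$ partitions into induced subgraphs, each a triangle, a copy of $K_4^*$, or one of the two order-$9$ graphs of Figure~\ref{figd1}, with the $D/\bar D$-split inside each piece matching the one built in the proof of Lemma~\ref{lemma1} (for example a $K_4^*$-piece is recognized from a $D$-edge lying in two triangles). Contracting the pieces yields a connected subcubic multigraph $H$ with $3n_1+2n_2+n_3=6k$, where $k$ is defined by $\alpha(G)=6k$; the $K_2$-components of $G[D]$ joining two different pieces form an induced matching $M$ of size $k$ covering all degree-$1$ vertices of $H$; and orienting each remaining edge of $H$ away from the piece containing its $D$-endpoint gives an orientation of $H-M$. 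That every vertex not incident with $M$ has out-degree exactly $2$ is then automatic: the total out-degree is $|E(H)|-k$, and, combined with $3n_1+2n_2+n_3=6k$, this forces the $M$-covered vertices to have out-degree $0$. Hence $H\in{\cal H}_k$ and $G\in{\cal G}_k$.

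The main obstacle is the gluing step: while the type analysis and the exchange arguments pin down the picture around each $D$-vertex, each $R^{\mathrm{iso}}$-vertex, and each $\bar D$-edge, it takes genuine care to show that these pieces can fit together only into the three prescribed gadgets --- in particular to determine which gadget each $\bar D$-edge belongs to, to see how the two order-$9$ gadgets (and nothing else) can arise, and to exclude hybrid or larger configurations, including verifying that the recovered $M$ really is an induced matching in the recovered $H$. Recovering $H$, $M$, and the orientation, and checking the arithmetic, is then routine, modulo the pleasant cancellation that makes the out-degree condition hold for free.
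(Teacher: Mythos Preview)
Your setup and endgame match the paper: you correctly invoke Lemma~\ref{lemma1} for the \emph{if} direction, you correctly extract $p=0$, $q=5k$, $r=4k$, $s=2k$, $\alpha(G)=r+s$ from the equality case of Theorem~\ref{theorem1}, and the final paragraph (contract the gadgets to obtain $H$, read off $M$ and the orientation, check the arithmetic) is essentially the paper's closing page. The trichotomy (A)/(B)/(C) for a single $D$-vertex is also a valid observation.

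The gap is exactly where you say it is, and it is a real one: your ``battery of further exchange arguments'' is a promissory note, not an argument, and the paper's actual proof shows that purely local exchanges around one $D$-edge or one $\bar D$-vertex are not how the gadget decomposition is obtained. The paper instead works \emph{globally}: it studies induced alternating paths and cycles between $D$ and $\bar D$, proves (Claims~1--2) that no such induced path has both ends in $\bar D_1$ and no such induced cycle meets $\bar D_1$, then \emph{recursively removes} closed neighborhoods of $\bar D_0$-alternating cycles to obtain an intermediate graph $G'$, defines around each $\bar D$-edge $uv$ a ``bubble'' $B_{uv}$ (the union of the special $uv$-cycles in $G'$), and spends Claims~3--5 showing that bubbles have no ``bubble extensions'' and are pairwise disjoint. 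A double-counting argument over the removed cycle sets, the bubbles, and the residual graph $G''$ then forces equality everywhere, after which Claims~7--9 pin the bubbles down to the four allowed shapes and force $G''$ and each removed cycle set to be built from triangles. None of this machinery is visible in your outline, and in particular the disjointness of the putative gadgets and the exclusion of ``hybrid or larger configurations'' --- which you flag as the main obstacle --- is precisely what Claims~4, 5, 8, and 9 are for; these are not local swaps but arguments along long alternating paths that thread through several pieces at once.

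Two smaller points. First, your claim that an $R^{\mathrm{iso}}$-vertex with three $D$-neighbors in three distinct $D$-edges is excluded is not something you have argued, and it is not obvious that a one-step exchange rules it out. Second, your ``automatic'' out-degree argument presupposes that every cross-gadget edge not in $M$ has exactly one endpoint in $D$ (so the orientation is well-defined); the paper establishes this --- and hence that $M$ is induced --- by a separate edge count, not by the out-degree bookkeeping you sketch.
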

\begin{proof}
Let $G$ be a connected cubic graph of order $n$ at least $6$ 
with $\alpha(G)=\frac{3}{5}{\rm diss}(G)$.
In view of Lemma \ref{lemma1}, it suffices to show that $G$ belongs to 
$\bigcup\limits_{k\in\mathbb{N}}{\cal G}_k$.
Let $D$, $\bar{D}$, $p$, $q$, $r$, and $s$ be exactly as in the proof of Theorem \ref{theorem1}. Recall that $D$ was chosen to maximize $p$,
and that, using that $G$ is extremal, we obtained that $p=0$.
This implies that every maximum dissociation set of $G$ is a suitable choice for $D$,
and, hence, every maximum dissociation set shares the following properties with $D$,
where $k=\frac{n}{18}$: 
\begin{eqnarray}\label{eext0}
\mbox{$p=0$, $q=5k$, $r=4k$, $s=2k$, and $\alpha(G)=r+s$.}
\end{eqnarray}
Let the set $\bar{D}_0$ contain the $r=2k$ isolated vertices of $G[\bar{D}]$,
and let the set $\bar{D}_1$ contain the remaining $2s=4k$ vertices from $\bar{D}$.
A path or cycle in $G$ is {\it alternating} 
if it alternates between the vertices from $D$ and $\bar{D}$.

\begin{claim}\label{claim1}
No induced alternating path has both its endpoints in $\bar{D}_1$.
\end{claim}
\begin{proof}[Proof of Claim \ref{claim1}.]
Suppose, for a contradiction, that 
$P:\bar{u}_1u_1\bar{u}_2u_2\ldots \bar{u}_\ell u_\ell\bar{u}_{\ell+1}$
is an induced alternating path with 
$\bar{u}_1,\bar{u}_{\ell+1}\in \bar{D}_1$.
Since $P$ is induced, the vertices 
$\bar{u}_1$ and $\bar{u}_{\ell+1}$ 
are not adjacent.
Let $\bar{v}_1$ and $\bar{v}_{\ell+1}$ be the neighbors of 
$\bar{u}_1$ and $\bar{u}_{\ell+1}$ in $\bar{D}_1$,
respectively.
Since $P$ is induced, the set $I$ containing
$\bar{v}_1,u_1,\ldots,u_\ell, \bar{v}_{\ell+1}$
as well as one vertex from every component of $G[\bar{D}]$ 
that does not intersect $P$
is independent.
Since $P$ intersects at most $\ell+1$ of the $r+s$ components of $G[\bar{D}]$,
we obtain that $|I|=1+\ell+1+(r+s)-(\ell+1)=r+s+1
\stackrel{(\ref{eext0})}{>}\alpha(G)$, 
which is a contradiction.
\end{proof}

\begin{claim}\label{claim2}
No induced alternating cycle contains a vertex from $\bar{D}_1$.
\end{claim}
\begin{proof}[Proof of Claim \ref{claim2}.]
Suppose, for a contradiction, that 
$C:\bar{u}_1u_1\bar{u}_2u_2\ldots \bar{u}_\ell u_\ell\bar{u}_1$
is an induced alternating cycle with 
$\bar{u}_1\in \bar{D}_1$.
Let $\bar{v}_1$ be the neighbor of $\bar{u}_1$ in $\bar{D}_1$.
Since $C$ is induced, the vertex $\bar{v}_1$ does not lie on $C$,
and the set $I$ containing
$\bar{v}_1,u_1,\ldots,u_\ell$
as well as one vertex from every component of $G[\bar{D}]$ 
that does not intersect $C$
is independent.
Since $C$ intersects at most $\ell$ of the $r+s$ components of $G[\bar{D}]$,
we obtain that $|I|=1+\ell+(r+s)-\ell=r+s+1
\stackrel{(\ref{eext0})}{>}\alpha(G)$, 
which is a contradiction.
\end{proof}
For a set $X$ of vertices of $G$, let $d(X)=|X\cap D|$ and 
$\bar{d}(X)=|X\cap \bar{D}|$.
For a subgraph $H$ of $G$, let $d(H)=d(V(H))$ 
and $\bar{d}(H)=\bar{d}(V(H))$.
A {\it $\bar{D}_0$-alternating cycle} is a (not necessarily induced) alternating cycle in $G$
that does not contain a vertex from $\bar{D}_1$.
For a $\bar{D}_0$-alternating cycle $C$, 
the set $N_G\left[V(C)\cap \bar{D}\right]$
is the corresponding {\it alternating cycle set}.
If $X=N_G\left[V(C)\cap \bar{D}\right]$ is an alternating cycle set, 
then every vertex in $V(C)\cap \bar{D}$ 
has at least two of its three neighbors in $V(C)$
and at most one neighbor outside of $V(C)$, which implies
\begin{eqnarray}\label{eext1}
d(X) & \leq & 2\bar{d}(X).
\end{eqnarray}
We now apply a recursive reduction to $G$:
As long as the current graph contains a $\bar{D}_0$-alternating cycle,
iteratively remove the corresponding alternating cycle set.
Let $G'$ be the resulting induced subgraph of $G$,
that is, the graph $G'$ contains no $\bar{D}_0$-alternating cycle.
By construction, 
all vertices from $\bar{D}_1$ belong to $G'$, and,
for every vertex from $D$ in $G'$, 
its two neighbors in $\bar{D}$ belong to $G'$.

Let $uv$ be one of the $s=2k$ edges in $G[\bar{D}]$.
A {\it special $uv$-cycle} is an induced cycle $C$ in $G'$ that contains $uv$ 
such that $C-uv$ is an alternating path.
Let the {\it bubble $B_{uv}$ of $uv$} 
be the union of $uv$ and all special $uv$-cycles.
Claim \ref{claim1} implies
$(V(B_{uv})\cap \bar{D})\setminus \{ u,v\} \subseteq \bar{D}_0$.
Our goal is to show that the bubbles of the edges in $G[\bar{D}]$ 
are exactly the four different graphs of orders $3$, $6$, and $9$
shown in Figures \ref{figd3}, \ref{figd2}, and \ref{figd1}
replacing certain vertices,
in particular, the bubbles are disjoint.

By definition, 
\begin{itemize}
\item every vertex in $V(B_{uv})\cap D$ sends exactly two edges to $V(B_{uv})\cap \bar{D}$,
\item each of the two vertices $u$ and $v$ sends at most two edges to $V(B_{uv})\cap D$,
and 
\item every vertex in $(V(B_{uv})\cap \bar{D})\setminus \{ u,v\}$ 
sends at most three edges to $V(B_{uv})\cap D$.
\end{itemize}
Double-counting the edges between $V(B_{uv})\cap D$ and $V(B_{uv})\cap \bar{D}$,
these observations imply
\begin{eqnarray}\label{eext2}
2d(B_{uv}) & \leq & 3\bar{d}(B_{uv})-2.
\end{eqnarray}

\begin{claim}\label{claim3}
$d(B_{uv}) \leq 2\bar{d}(B_{uv})-3$ for every edge $uv$ in $G[\bar{D}]$.
\end{claim}
\begin{proof}[Proof of Claim \ref{claim3}.]
By (\ref{eext2}), we have 
$\bar{d}(B_{uv})\geq \left\lceil \frac{2}{3}(d(B_{uv})+1)\right\rceil$.
For $d(B_{uv})\geq 3$, 
this implies $\bar{d}(B_{uv})\geq \frac{1}{2}(d(B_{uv})+3)$,
and, hence, the claim.
Since $\bar{d}(B_{uv})\geq |\{ u,v\}|=2$,
the claim holds for $d(B_{uv})\leq 1$.
Now, let $d(B_{uv})=2$.
Clearly, we may assume $\bar{d}(B_{uv})=2$.
Let $V(B_{uv})\cap D=\{ u',v'\}$.
By the definition of $B_{uv}$,
the special $uv$-cycles are two triangles, and, hence,
there are all possible four edges between $\{ u,v\}$ and $\{ u',v'\}$.
Since $G$ is not a $K_4$, the vertices $u'$ and $v'$ are not adjacent.
Now, the set $D'=(D\setminus \{ u',v'\})\cup \{ u,v\}$
is a maximum dissociation set of $G$ 
such that $G[D']$ has two isolated vertices, 
which contradicts (\ref{eext0}).
\end{proof}
Let $uv$ be an edge in $G[\bar{D}]$.

An induced subgraph $H$ of $G'$ is a {\it bubble extension} of $B_{uv}$ if
\begin{itemize}
\item there is a special $uv$-cycle $C$
and 
\item an induced alternating path $P$
between a vertex $x$ from $V(C)\cap \bar{D}_0$
and a vertex $y$ from $D$ such that 
\begin{itemize}
\item $V(P)\not\subseteq V(B_{uv})$,
\item $x$ is the only common vertex of $C$ and $P$,
\item $y$ is adjacent to some vertex $z$ from $V(C)\cap D$, 
\item $H=G[V(C)\cup V(P)]$, and 
\item $E(H)=E(C)\cup E(P)\cup \{ yz\}$.
\end{itemize}
\end{itemize}
Note 
that $x$ and $z$ are the only vertices of $H$ that are of degree $3$ in $H$,
that all remaining vertices of $H$ have degree $2$ in $H$, and 
that $d(H)=\bar{d}(H)$.

\begin{claim}\label{claim4}
No bubble has a bubble extension.
\end{claim}
\begin{proof}[Proof of Claim \ref{claim4}.]
Suppose, for a contradiction, that $H$ is a bubble extension of the bubble $B_{uv}$.
Let $C$, $P$, $x$, $y$, and $z$ be as above.
By symmetry, we may assume that $z$ lies on the unique path in $C$ 
between $x$ and $v$ that avoids $u$.
This implies that, for every vertex $w$ in $(V(H)\cap \bar{D})\setminus \{ u,v\}$,
there is an induced alternating path between $w$ and $u$,
and Claim \ref{claim1} implies
$(V(H)\cap \bar{D})\setminus \{ u,v\} \subseteq \bar{D}_0$.

Let $H'$ be an inclusionwise maximal induced subgraph of $G'$ such that 
the following conditions (i), (ii), and (iii) hold:
\begin{enumerate}[(i)]
\item 
\begin{itemize}
\item $H\subseteq H'$,
\item $u$ and $v$ have degree $2$ in $H'$,
\item $\left(V\left(H'\right)\cap \bar{D}\right)\setminus \{ u,v\} 
\subseteq \bar{D}_0$,
\item $yz$ is the only edge in $G'\left[V\left(H'\right)\cap D\right]$, and
\item $d(H')=\bar{d}(H')$.
\end{itemize}
\item For every vertex $w$ in $V\left(H'\right)\setminus \{ u,v\}$,
the graph $H'$ contains 
an alternating path $P_u(w)$ between $w$ and $u$ as well as 
an alternating path $P_v(w)$ between $w$ and $v$ such that 
\begin{itemize}
\item $P_u(w)$ is induced and
\item either $P_v(w)$ is induced 

or 
$E[G[V(P_v(w))]]\setminus E(P_v(w))=\{ yz\}$, 
and $P$ is a subpath of $P_u(w)$ as well as of $P_v(w)$.
\end{itemize}
\item Exactly one vertex $y'$ 
from $V\left(H'\right)\cap D$
has exactly one neighbor in $V\left(H'\right)\cap \bar{D}$,
and all remaining vertices from $V\left(H'\right)\cap D$
have exactly two neighbors in $V\left(H'\right)\cap \bar{D}$.
\end{enumerate}
Since $(H,y)$ satisfies all properties required for $\left(H',y'\right)$ 
in (i), (ii), and (iii),
the graph $H'$ is well defined.
The proof is completed by showing that $H'$ is a proper subgraph
of an induced subgraph $H''$ of $G'$ satisfying (i), (ii), and (iii),
contradicting the maximality of $H'$.

Let $D'=V\left(H'\right)\cap D$ and
$\bar{D}'=V\left(H'\right)\cap \bar{D}$.

By (iii), the vertex $y'$ has exactly one neighbor $x'$ in $\bar{D}\setminus \bar{D}'$.
Note that $y'$ is the only neighbor of $x'$ in $V(H')$, and, hence,
extending the two paths $P_u(y')$ and $P_v(y')$ guaranteed for $y'$ in (ii) 
by the edge towards $x'$, yields two paths $P_u(x')$ and $P_v(x')$ 
with the properties stated in (ii) also for $x'$.
By Claim \ref{claim1}, the induced alternating path $P_u(x')$ between $x'$ and $u$
implies that $x'$ belongs to $D_0$.

By (i), (iii), and the choice of $x'$, 
there are exactly $2d(H')-2$ edges in $G'$ between
$D'$ and $\{ x'\}\cup (\bar{D}'\setminus \{ u,v\})$.
Since, by (i), every vertex in $\{ x'\}\cup (\bar{D}'\setminus \{ u,v\})$ 
sends exactly three edges to $D$,
there are exactly
$$3|\{ x'\}\cup (\bar{D}'\setminus \{ u,v\})|-2d\left(H'\right)+2=
3\left(\bar{d}\left(H'\right)-1\right)-2d\left(H'\right)+2
\stackrel{(i)}{=}d\left(H'\right)-1$$ edges between
$\{ x'\}\cup (\bar{D}'\setminus \{ u,v\})$
and $D\setminus D'$.

If some vertex in $D\setminus D'$ has two neighbors 
in $\{ x'\}\cup (\bar{D}'\setminus \{ u,v\})$,
say $a'$ and $a''$, 
then the two paths $P_u(a')$ and $P_u(a'')$ 
share at least the edge incident with $u$,
and, hence, their union contains a $\bar{D}_0$-alternating cycle,
contradicting the choice of $G'$.
Hence, every vertex in $D\setminus D'$ has at most 
one neighbor in $\{ x'\}\cup (\bar{D}'\setminus \{ u,v\})$.

Since $yz$ is an edge in $D'$,
and $|D'\setminus \{ y,z\}|=d\left(H'\right)-2$,
this implies the existence of some vertex $y''$ in $D\setminus D'$
that has exactly one neighbor in $\{ x'\}\cup (\bar{D}'\setminus \{ u,v\})$,
say $x''$, and no neighbor in $D'$.

If $y''$ is not adjacent to $u$ or $v$,
then $H''=G[V(H')\cup \{ x',y''\}]$ satisfies (i), (ii), and (iii),
contradicting the choice of $H'$.
Note that extending the two paths $P_u(x'')$ and $P_v(x'')$
guaranteed for $x''$ in (ii) by the edge towards $y''$, 
yields two paths $P_u(y'')$ and $P_v(y'')$ 
with the properties stated in (ii) also for $y''$.
Hence, we may assume that $y''$ is adjacent to $u$ or $v$.

If $y''$ is adjacent to $u$,
then $P_u(x'')$ together with $y''$ and the two edges 
$uy''$ and $x''y''$ yields an induced alternating cycle 
that contains a vertex from $\bar{D}_1$,
contradicting Claim \ref{claim2}.
If $y''$ is adjacent to $v$, and $P_v(x'')$ is induced,
then $P_v(x'')$ together with $y''$ and the two edges 
$vy''$ and $x''y''$ yields an induced alternating cycle 
that contains a vertex from $\bar{D}_1$,
contradicting Claim \ref{claim2}.
Hence, we may assume that $y''$ is adjacent to $v$,
and that $P_v(x'')$ is not induced.
By (ii), this implies that $P_u(x'')$ contains $P$ as a subpath.
Now, the path $P_u(x'')$ together with the vertices $v$ and $y''$ 
and the edges $uv$, $vy''$, and $x''y''$ 
yields a special $uv$-cycle.
By the definition of the bubble $B_{uv}$,
this implies that all vertices of $P$ belong to $B_{uv}$,
contradicting the condition $V(P)\not\subseteq V(B_{uv})$
in the definition of the bubble extension $H$.
This completes the proof.
\end{proof}

\begin{claim}\label{claim5}
Every two bubbles are disjoint.
\end{claim}
\begin{proof}[Proof of Claim \ref{claim5}.]
Suppose, for a contradiction, 
that the two bubbles $B_{uv}$ and $B_{u'v'}$ are not disjoint,
where $uv$ and $u'v'$ are distinct edges in $G[\bar{D}]$.
By Claim \ref{claim1}, we obtain 
$u',v'\not\in V(B_{uv})$ and
$u,v\not\in V(B_{u'v'})$.
Let $x\in V(B_{uv})\cap V(B_{u'v'})$.
Let $C$ be a special $uv$-cycle and let $C'$ be a special $u'v'$-cycle 
such that $x$ lies on $C$ and on $C'$.
Let $P$ be a shortest path in $C'$ between $\{ u',v'\}$ and $V(C)$.
By Claim \ref{claim1}, 
there is at least one edge between $V(P)\cap D$ and $V(C)\cap D$.
This implies that $C\cup P$ contains a bubble extension of $B_{uv}$,
contradicting Claim \ref{claim4}.
\end{proof}
Let 
$$G''=G'-\bigcup_{uv\in E(G[\bar{D}])}V(B_{uv}).$$
\begin{claim}\label{claim6}
$\bar{D}\cap V(G'')$ is a maximum independent set in $G''$,
and $d(G'')\leq 2\bar{d}(G'')$.
\end{claim}
\begin{proof}[Proof of Claim \ref{claim6}.]
Since $\alpha(G'')\geq \frac{1}{2}d(G'')$, 
the first part of the statement implies the second.
Therefore, suppose, for a contradiction, 
that the independent set $\bar{D}\cap V(G'')$ 
is not a maximum independent set in $G''$.
This implies that a maximum independent set $I''$ in $G''$ intersects $D$.
Let $I''$ be chosen such that $|I''\cap D|$ is as small as possible.
Let $F=G\left[I''\cup \left(V(G'')\cap \bar{D}\right)\right]$.
Since $G''$ contains no $\bar{D}_0$-alternating cycle,
the graph $F$ is a forest.

If $V(G'')\cap \bar{D}$ is not empty,
then let $T$ be a component of $F$ that contains a vertex from $\bar{D}$.
By the choice of $I''$, we have $|V(T)\cap D|>|V(T)\cap \bar{D}|$,
which implies that $T$ contains two distinct leaves from $D$.
In this case, let $P$ be a path in $T$ 
between two distinct leaves $x$ and $x'$ of $T$ that belong to $D$.
If $V(G'')\cap \bar{D}$ is empty, then
let $P$ be a path of length $0$ consisting of a vertex $x=x'$ from $I''$.

There are two distinct edges $xy$ and $x'y'$ with $y,y'\in \bar{D}\setminus V(G'')$.
By the definition of $G'$, it follows that $y$ and $y'$ both lie in bubbles,
say $y\in B_{uv}$ and $y'\in B_{u'v'}$, where $uv=u'v'$ is possible.

If $uv\not=u'v'$, 
then $B_{uv}\cup B_{u'v'}\cup P$ contains 
an induced alternating path with both endpoints in $\bar{D}_1$
or a bubble extension of $B_{uv}$ or $B_{u'v'}$,
which implies a contradiction to Claim \ref{claim1} or Claim \ref{claim4}.
Hence, we obtain that $uv=u'v'$.
By Claim \ref{claim2} and the definition of $B_{uv}$,
we may assume, by symmetry, that $y$ is distinct from $u$ and $v$.
Let $C$ be a special $uv$-cycle containing $y$.

First, suppose that $y'=u$.
By Claim \ref{claim2}, the alternating cycle contained in $C\cup P$
that contains $P$ and intersects $\bar{D}_1$ in $u$ is not induced.
By Claim \ref{claim4},
this implies that $x'$ has its neighbor in $D$ 
on the path in $C$ between $y$ and $u$ that avoids $v$.
Since the path in $C$ between $y$ and $v$ that avoids $u$
together with $P$ and the edges $uv$, $xy$, and $x'y'$ 
is not a special $uv$-cycle,
the union of $C$ and $P$ contains a bubble extension,
contradicting Claim \ref{claim4}.
Hence, we may assume, by symmetry, that $y'$ is distinct from $u$ and $v$.
Let $C'$ be a special $u'v'$-cycle containing $y'$.
Since $G'$ contains no $\bar{D}_0$-alternating cycle, 
we obtain $V(C)\cap V(C')=\{ u,v\}$,
that is, in particular, $y'\not\in V(C)$ and $y\not\in V(C')$.
By Claim \ref{claim4}, 
there is no edge 
between $(V(C)\cup V(C'))\cap D$ and $V(P)\cap D$.
Let $P_u$ be the path in $C$ between $y$ and $u$ avoiding $v$,
let $P_v$ be the path in $C$ between $y$ and $v$ avoiding $u$,
let $P'_u$ be the path in $C'$ between $y'$ and $u$ avoiding $v$, and
let $P'_v$ be the path in $C'$ between $y'$ and $v$ avoiding $u$.
By Claim \ref{claim2}, 
there is an edge 
between $V(P_u)\cap D$ and $V(P'_u)\cap D$
as well as 
between $V(P_v)\cap D$ and $V(P'_v)\cap D$.
By the definition of $B_{uv}$ and since no vertex of $P$ belongs to $B_{uv}$,
there is an edge 
between $V(P_u)\cap D$ and $V(P'_v)\cap D$
as well as 
between $V(P_v)\cap D$ and $V(P'_u)\cap D$.
Now, there is a bubble extension of $B_{uv}$
using $C$, $P$ and a proper subpath of $P_u'$,
contradicting Claim \ref{claim4}.
\end{proof}
Let ${\cal X}$ be the collection of all alternating cycle sets 
that were recursively removed from $G$ to construct $G'$.
By (\ref{eext0}), (\ref{eext1}), Claim \ref{claim3}, and Claim \ref{claim6}, we obtain
\begin{eqnarray*}
10k &=& d(G)\\
&=& \sum_{X\in {\cal X}}d(X)+\sum_{e\in E(G[\bar{D}_1])}d(B_e)+d(G'')\\
&\leq & \sum_{X\in {\cal X}}2\bar{d}(X)
+\sum_{e\in E(G[\bar{D}_1])}(2\bar{d}(B_e)-3)
+2\bar{d}(G'')\\
&=& 2\bar{d}(G)-3s\\
&=& 2(r+2s)-3s\\
&=& 10k.
\end{eqnarray*}
Since equality holds throughout this inequality chain, 
each individual estimate must be satisfied with equality, that is,
\begin{itemize}
\item $d(X)=2\bar{d}(X)$ for every alternating cycle set $X$ in ${\cal X}$,
\item $d(B_e)=2\bar{d}(B_e)-3$ for every edge $e$ of $G[\bar{D}_1]$, and
\item $d(G'')=2\bar{d}(G'')$.
\end{itemize}
Since, by Claim \ref{claim6},
we have $d(G'')\leq {\rm diss}(G'')\leq 2\alpha(G'')=2\bar{d}(G'')=d(G'')$,
the subcubic graph $G''$ satisfies ${\rm diss}(G'')=2\alpha(G'')$,
and, therefore, each of its components belongs to ${\cal G}$ 
as described in Theorem \ref{theorem3}.
If $G''$ contains a copy of $K^*_4$, 
then this subgraph contains no two adjacent vertices from $\bar{D}_1$,
which easily implies the existence of a $\bar{D}_0$-alternating cycle in that subgraph,
contradicting the construction of $G'$.
Since $d(G'')=2\bar{d}(G'')$, 
the graph $G''$ arises from the union of disjoint copies of $K_3$
by adding additional edges as described for Theorem \ref{theorem3}.
Since $G''$ does not contain $\bar{D}_0$-alternating cycles,
these additional edges are actually all bridges in $G''$,
that is, the disjoint copies of $K_3$ are the only cycles in $G''$.
Note that, while copies of $K_3$ in a graph as in ${\cal G}$
contribute twice as many vertices to $D$ than to $\bar{D}$,
copies of $K_2$ only contribute vertices to $D$,
which implies that the construction of $G''$ 
does not use any copy of $K_2$.

\begin{claim}\label{claim7}
All bubbles are isomorphic to 
$K_3$,
$K_4^*$, or one of the two graphs in Figure \ref{figb}.
\begin{figure}[H]
\begin{center}
\unitlength .6mm 
\linethickness{0.4pt}
\ifx\plotpoint\undefined\newsavebox{\plotpoint}\fi 
\begin{picture}(107,38)(0,0)
\put(8,0){\circle*{2}}
\put(78,0){\circle*{2}}
\put(28,0){\circle*{2}}
\put(98,0){\circle*{2}}
\put(8,0){\line(1,0){20}}
\put(78,0){\line(1,0){20}}
\put(0,10){\circle*{2}}
\put(70,10){\circle*{2}}
\put(36,10){\circle*{2}}
\put(106,10){\circle*{2}}
\put(0,22){\circle*{2}}
\put(70,22){\circle*{2}}
\put(36,22){\circle*{2}}
\put(106,22){\circle*{2}}
\put(8,32){\circle*{2}}
\put(78,32){\circle*{2}}
\put(28,32){\circle*{2}}
\put(98,32){\circle*{2}}
\put(18,37){\circle*{2}}
\put(88,37){\circle*{2}}
\put(18,37){\line(-2,-1){10}}
\put(88,37){\line(-2,-1){10}}
\put(8,32){\line(-4,-5){8}}
\put(78,32){\line(-4,-5){8}}
\put(0,22){\line(0,-1){12}}
\put(70,22){\line(0,-1){12}}
\put(0,10){\line(4,-5){8}}
\put(70,10){\line(4,-5){8}}
\put(18,37){\line(2,-1){10}}
\put(88,37){\line(2,-1){10}}
\put(28,32){\line(4,-5){8}}
\put(98,32){\line(4,-5){8}}
\put(36,22){\line(0,-1){12}}
\put(106,22){\line(0,-1){12}}
\put(36,10){\line(-4,-5){8}}
\put(106,10){\line(-4,-5){8}}
\multiput(0,22)(.055944056,-.153846154){143}{\line(0,-1){.153846154}}
\multiput(70,22)(.055944056,-.153846154){143}{\line(0,-1){.153846154}}
\multiput(36,22)(-.055944056,-.153846154){143}{\line(0,-1){.153846154}}
\multiput(106,22)(-.055944056,-.153846154){143}{\line(0,-1){.153846154}}
\multiput(36,10)(-.055944056,.153846154){143}{\line(0,1){.153846154}}
\multiput(8,32)(-.055944056,-.153846154){143}{\line(0,-1){.153846154}}
\multiput(78,32)(.0714285714,-.056122449){392}{\line(1,0){.0714285714}}
\multiput(98,32)(-.0714285714,-.056122449){392}{\line(-1,0){.0714285714}}
\end{picture}
\end{center}
\caption{Two bubbles of order $9$.}\label{figb}
\end{figure}
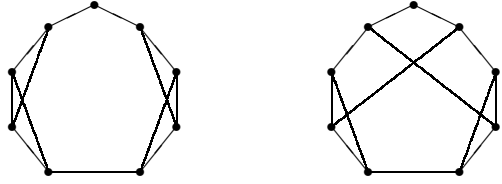
\end{claim}
\begin{proof}[Proof of Claim \ref{claim7}.]
Let $uv$ be an edge of $G[\bar{D}_1]$.
Since
$d(B_{uv})=2\bar{d}(B_{uv})-3\stackrel{(\ref{eext2})}{\geq}\frac{4}{3}(d(B_{uv})+1)-3$, we obtain that $d(B_{uv})\in \{ 1,3,5\}$.

If $d(B_{uv})=1$, then $\bar{d}(B_{uv})=2$, and $B_{uv}$ is a triangle.

Now, let $d(B_{uv})=3$, and, hence, $\bar{d}(B_{uv})=3$.
If $V(B_{uv})\cap D$ is independent, then, 
since the vertices in $V(B_{uv})\cap D$ 
have all their neighbors in $\bar{D}$ within the bubble $B_{uv}$, 
the graph $G$ has an independent set containing 
the three vertices from $V(B_{uv})\cap D$
as well as
$r+s-2$ further vertices, one from each component of $G[\bar{D}]$
that does not intersect $B_{uv}$,
which yields a contradiction to (\ref{eext0}).
Hence, the set $V(B_{uv})\cap D$ contains exactly one edge.
Let $V(B_{uv})\cap D=\{ u_1,u_2,u_3\}$ such that $u_1u_2$ is an edge,
and let $V(B_{uv})\cap \bar{D}=\{ u,v,w\}$.
Since $w$ lies on a special $uv$-cycle, 
we obtain, by symmetry, 
that $w$ is adjacent to $u_2$ and $u_3$,
that $u_2$ is adjacent to $u$, and
that $u_3$ is adjacent to $v$.
Since $u_1$ lies on a special $uv$-cycle, 
the vertex $u_1$ is adjacent to 
either $u$ and $v$ (see the right part of Figure \ref{figb2}),
or $u$ and $w$ (see the left part of Figure \ref{figb2}), 
or $v$ and $w$.
In the third case, the independent set $\{ u,u_1,u_3\}$ 
can be extended to an independent set in $G$ with more than $r+s$ vertices,
contradicting (\ref{eext0}).
Altogether, the bubble $B_{uv}$ is isomorphic to $K_4^*$.

Finally, let $d(B_{uv})=5$, and, hence, $\bar{d}(B_{uv})=4$.
Similarly as above, we obtain that $V(B_{uv})\cap D$ contains exactly two edges.
In view of the definition of a bubble, 
it is again easy to verify that $B_{uv}$ 
is isomorphic to one of the two graphs in Figure \ref{figb}.
Counting the edges of $B_{uv}$ between $D$ and $\bar{D}$
actually implies that there is only one edge of $G$ leaving $V(B_{uv})$, 
and that both endpoints of this edge belong to $D$.
\end{proof}
By the final remark of the previous proof, 
the unique vertices of degree two in the bubbles $B_e$
of order $9$ shown in Figure \ref{figb} both lie in $D$, 
and their neighbors outside of $B_e$ also lie in $D$.
For bubbles $B_e$ that are copies of $K_4^*$, 
these observations easily imply that $V(B_e)\cap D$ 
is as shown in Figure \ref{figb2}.

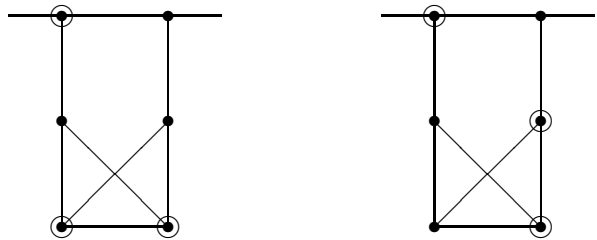
\begin{figure}[H]
\begin{center}
\unitlength 0.7mm 
\linethickness{0.4pt}
\ifx\plotpoint\undefined\newsavebox{\plotpoint}\fi 
\begin{picture}(110,42)(0,0)
\put(10,0){\circle*{2}}
\put(80,0){\circle*{2}}
\put(10,20){\circle*{2}}
\put(80,20){\circle*{2}}
\put(10,40){\circle*{2}}
\put(80,40){\circle*{2}}
\put(30,0){\circle*{2}}
\put(100,0){\circle*{2}}
\put(30,20){\circle*{2}}
\put(100,20){\circle*{2}}
\put(30,40){\circle*{2}}
\put(100,40){\circle*{2}}
\put(10,0){\line(1,0){20}}
\put(80,0){\line(1,0){20}}
\put(10,40){\line(1,0){20}}
\put(80,40){\line(1,0){20}}
\put(10,40){\line(0,-1){40}}
\put(80,40){\line(0,-1){40}}
\put(10,0){\line(1,1){20}}
\put(80,0){\line(1,1){20}}
\put(30,20){\line(0,-1){20}}
\put(100,20){\line(0,-1){20}}
\put(30,0){\line(-1,1){20}}
\put(100,0){\line(-1,1){20}}
\put(30,40){\line(0,-1){20}}
\put(100,40){\line(0,-1){20}}
\put(30,40){\line(1,0){10}}
\put(100,40){\line(1,0){10}}
\put(10,40){\line(-1,0){10}}
\put(80,40){\line(-1,0){10}}
\put(10,0){\circle{4}}
\put(100,20){\circle{4}}
\put(30,0){\circle{4}}
\put(100,0){\circle{4}}
\put(10,40){\circle{4}}
\put(80,40){\circle{4}}
\end{picture}
\end{center}
\caption{Possible intersections up to symmetry of bubbles $B_e$ 
that are isomorphic to $K_4^*$;
the vertices that belong to $D$ are encircled.}\label{figb2}
\end{figure}

\begin{claim}\label{claim8}
For $X\in {\cal X}$ with $|X|>6$,
the set $\bar{D}\cap X$ is a maximum independent set in $G[X]$.
\end{claim}
\begin{proof}[Proof of Claim \ref{claim8}]
By definition and since $d(X)=2\bar{d}(X)$,
the induced subgraph $G[X]$ of $G$ contains a $\bar{D}_0$-alternating cycle 
$C:u_1\bar{u}_1u_2\bar{u}_2\dots u_{\ell}\bar{u}_{\ell}u_1$,
where $\{\bar{u}_1,\ldots,\bar{u}_{\ell}\}=\bar{D}\cap X$,
and the set $W$ defined as $X\setminus V(C)$
consists of $\ell$ distinct vertices $w_1,\ldots,w_{\ell}$ from $D$,
where $w_i$ is the third neighbor of $u_i$ in $D$ for $i\in [\ell]$.
Since $|X|>6$, we have $\ell\geq 3$.

First, we show that, for every $i\in [\ell]$,
there is an induced alternating path $P_i$ between $w_i$ 
and a vertex from $\bar{D}$ in some bubble $B_i$,
where $V(P_i)\cap X=\{ w_i\}$ and $|V(P_i)\cap V(B_i)|=1$.
Let $i\in [\ell]$.
Let $y_i$ be the neighbor of $w_i$ in $\bar{D}$ that is distinct from $\bar{u}_i$.
Since $w_i\in X$, the vertex $y_i$ 
cannot belong to some alternating cycle set in ${\cal X}$.
If $y_i$ belongs to some bubble, then $P_i:w_iy_i$ is the desired path.
Otherwise, the vertex $y_i$ belongs to $G''$, and, hence, 
lies in a triangle $y_iw_i'w_i''$ in $G''$ with $w_i',w_i''\in D$.
Let $y_i'$ be the neighbor of $w'_i$ in $\bar{D}$ that is distinct from $y_i$.
Again, either $y_i'$ belongs to some bubble, 
in which case $P_i:w_iy_iw_i'w_i$ is the desired path,
or $y_i'$ lies in a second triangle in $G''$.
Continuing this reasoning, it follows that the desired path $P_i$ 
can be obtained using edges in distinct triangles in $G''$ 
as illustrated in Figure \ref{figpi}.

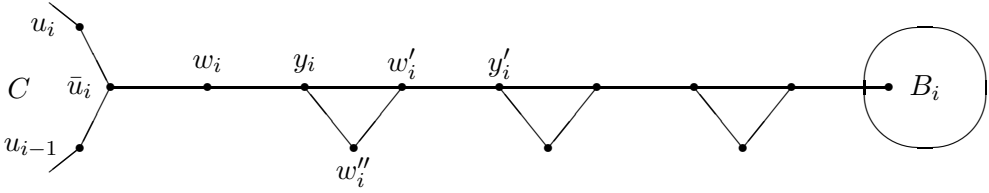
\begin{figure}[H]
\begin{center}
\unitlength 0.8mm 
\linethickness{0.4pt}
\ifx\plotpoint\undefined\newsavebox{\plotpoint}\fi 
\begin{picture}(164,29)(0,0)
\put(20,15){\circle*{1.5}}
\put(36,15){\circle*{1.5}}
\put(52,15){\circle*{1.5}}
\put(60,5){\circle*{1.5}}
\put(68,15){\circle*{1.5}}
\put(84,15){\circle*{1.5}}
\put(92,5){\circle*{1.5}}
\put(100,15){\circle*{1.5}}
\put(116,15){\circle*{1.5}}
\put(124,5){\circle*{1.5}}
\put(132,15){\circle*{1.5}}
\put(148,15){\circle*{1.5}}
\put(15,5){\circle*{1.5}}
\put(15,25){\circle*{1.5}}
\put(15,25){\line(1,-2){5}}
\put(20,15){\line(-1,-2){5}}
\put(15,5){\line(-5,-4){5}}
\put(15,25){\line(-5,4){5}}
\put(52,15){\line(1,0){16}}
\put(84,15){\line(1,0){16}}
\put(116,15){\line(1,0){16}}
\put(68,15){\line(-4,-5){8}}
\put(100,15){\line(-4,-5){8}}
\put(132,15){\line(-4,-5){8}}
\put(60,5){\line(-4,5){8}}
\put(92,5){\line(-4,5){8}}
\put(124,5){\line(-4,5){8}}
\put(52,15){\line(-1,0){16}}
\put(36,15){\line(-1,0){16}}
\put(68,15){\line(1,0){80}}
\put(36,19){\makebox(0,0)[cc]{$w_i$}}
\put(52,19){\makebox(0,0)[cc]{$y_i$}}
\put(68,19){\makebox(0,0)[cc]{$w_i'$}}
\put(60,1){\makebox(0,0)[cc]{$w_i''$}}
\put(84,19){\makebox(0,0)[cc]{$y_i'$}}
\put(15,15){\makebox(0,0)[cc]{$\bar{u}_i$}}
\put(7,5){\makebox(0,0)[cc]{$u_{i-1}$}}
\put(9,25){\makebox(0,0)[cc]{$u_i$}}
\put(154,15){\oval(20,20)[]}
\put(154,15){\makebox(0,0)[cc]{$B_i$}}
\put(5,15){\makebox(0,0)[cc]{$C$}}
\end{picture}
\end{center}
\caption{An induced alternating path $P_i$ between $w_i$ and some vertex in a bubble $B_i$.}\label{figpi}
\end{figure}
Note that, if $y_i$ does not belong to a bubble but to the triangle $y_iw_i'w_i''$
as in Figure \ref{figpi},
then we have a choice of using either $w_i'$ or $w_i''$ to construct the path $P_i$.
This leads to an alternative choice $P_i'$ for the path $P_i$
that is disjoint from $P_i$ after $y_i$, and leads to some bubble $B_i'$.
Note that $B_i$ can only coincide with $B_i'$ if $B_i$ is a triangle,
and the two paths $P_i$ and $P_i'$ use the two disjoint edges into $B_i$
that are incident with the vertices of $B_i$ from $\bar{D}$.

If the unique vertex in $V(P_i)\cap V(B_i)$ belongs to $\bar{D}_1$,
then let $Q_i=P_i$.
Otherwise, the bubble $B_i$ and its intersection with $D$
is necessarily as in the right part of Figure \ref{figb2}.
In this case, let the path $Q_i$ arise by extending $P_i$ 
by one vertex in $V(B_i)\cap D$ that has no neighbors outside of $V(B_i)$,
and one vertex in $V(B_i)\cap \bar{D}_1$.

\bigskip

\noindent In view of the claimed statement, we suppose, for a contradiction,
that $G[X]$ has a maximum independent set $I$ that is 
strictly bigger than $\bar{D}\cap X$.
Let $I$ be chosen such that $|I\cap D|$ is as small as possible.
Let $F=G[I\cup (\bar{D}\cap X)]$.

First, suppose that $F$ is not a forest.
By construction, the only possible cycle in $F$ is $C$,
that is, $F$ consists of $C$ and at least one vertex, say $w_1$, from $W$.
Now, the union of $I'=(V(Q_1)\cup V(C))\cap D$ 
with a maximal independent set in $G\left[\bar{D}\setminus N_G(I')\right]$ 
is an independent set in $G$ that contains more than $r+s$ vertices,
contradicting (\ref{eext0}).
Hence, the graph $F$ is a forest.

Let $T$ be a component of $F$ that contains a vertex, say $w_1$, from $W$.
By construction, the order of $T$ is at least $2$.
If $w_1$ is the only endvertex of $T$ from $W$,
then $I\Delta V(T)=(I\setminus V(T))\cup (V(T)\setminus I)$
is a maximum independent set of $G[X]$ containing less vertices from $D$,
contradicting the choice of $I$.
Hence, the tree $T$ contains an alternating induced path $P$
between $w_1$ and another vertex, say $w_t$, from $W$.
By symmetry, we may assume that
$P:w_1\bar{u}_1u_2\ldots u_t\bar{u}_tw_t$.

If the two bubbles $B_1$ and $B_t$ are distinct,
then $Q_1$, $P$, and $Q_t$ 
yield an induced alternating path with both endpoints in $\bar{D}_1$,
contradicting Claim \ref{claim1}.
Hence, we obtain that $B_1=B_t$.
Since the path $P_1$ and $P_t$ are disjoint,
the bubble $B_1$ must be a triangle $yy'z$,
the path $P_1$ ends in $y$,
the path $P_t$ ends in $y'$, and
$z$ belongs to $D$.
Since assuming that $B_1$ and $B_t$ differ leads to a contradiction,
the comment concerning the different choices for $P_i$ after Figure \ref{figpi}
implies that $P_1$ and $P_t$ both have length one,
that is, the paths $P_1$ and $P_t$ consist of the edges $w_1y$ and $w_ty'$, respectively.

If $t\geq 3$, then $B_2$ is distinct from $B_1$.
In this case, 
the paths $P_1$ or $P_t$, 
a suitable part of $P$, and $Q_2$
yield an induced alternating path with both endpoints in $\bar{D}_1$,
contradicting Claim \ref{claim1}.
Hence, we obtain that $t=2$,
that is, the length of $P$ is four.

Figure \ref{figs} illustrates the setup for the following arguments.

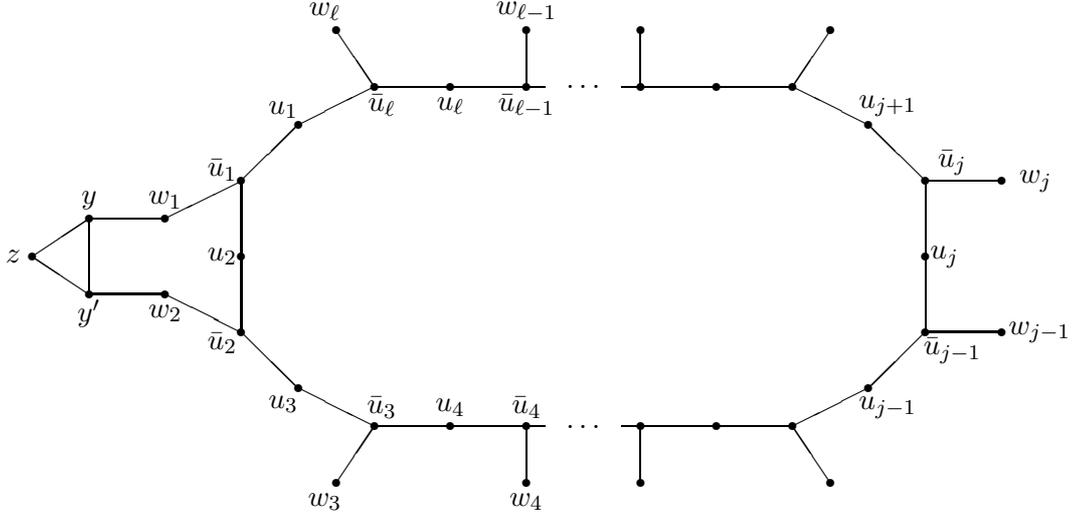
\begin{figure}[H]
\begin{center}
\unitlength 0.5mm 
\linethickness{0.4pt}
\ifx\plotpoint\undefined\newsavebox{\plotpoint}\fi 
\begin{picture}(260,130)(0,0)
\put(0,65){\circle*{2}}
\put(15,55){\circle*{2}}
\put(35,55){\circle*{2}}
\put(15,75){\circle*{2}}
\put(35,75){\circle*{2}}
\put(15,75){\line(0,-1){20}}
\put(15,55){\line(-3,2){15}}
\put(0,65){\line(3,2){15}}
\put(15,55){\line(1,0){20}}
\put(15,75){\line(1,0){20}}
\put(55,65){\circle*{2}}
\put(235,65){\circle*{2}}
\put(55,45){\circle*{2}}
\put(235,45){\circle*{2}}
\put(255,45){\circle*{2}}
\put(55,85){\circle*{2}}
\put(235,85){\circle*{2}}
\put(255,85){\circle*{2}}
\put(55,85){\line(-2,-1){20}}
\put(35,55){\line(2,-1){20}}
\put(55,85){\line(0,-1){20}}
\put(235,85){\line(0,-1){20}}
\put(55,65){\line(0,-1){20}}
\put(235,65){\line(0,-1){20}}
\put(70,100){\circle*{2}}
\put(220,100){\circle*{2}}
\put(70,30){\circle*{2}}
\put(220,30){\circle*{2}}
\put(55,85){\line(1,1){15}}
\put(235,85){\line(-1,1){15}}
\put(55,45){\line(1,-1){15}}
\put(235,45){\line(-1,-1){15}}
\put(90,110){\circle*{2}}
\put(200,110){\circle*{2}}
\put(90,20){\circle*{2}}
\put(200,20){\circle*{2}}
\put(70,100){\line(2,1){20}}
\put(220,100){\line(-2,1){20}}
\put(70,30){\line(2,-1){20}}
\put(220,30){\line(-2,-1){20}}
\put(90,110){\line(1,0){20}}
\put(200,110){\line(-1,0){20}}
\put(90,20){\line(1,0){20}}
\put(200,20){\line(-1,0){20}}
\put(110,110){\line(1,0){20}}
\put(180,110){\line(-1,0){20}}
\put(110,20){\line(1,0){20}}
\put(180,20){\line(-1,0){20}}
\put(110,110){\circle*{2}}
\put(180,110){\circle*{2}}
\put(110,20){\circle*{2}}
\put(180,20){\circle*{2}}
\put(130,110){\circle*{2}}
\put(160,110){\circle*{2}}
\put(130,20){\circle*{2}}
\put(160,20){\circle*{2}}
\put(130,110){\line(1,0){5}}
\put(160,110){\line(-1,0){5}}
\put(130,20){\line(1,0){5}}
\put(160,20){\line(-1,0){5}}
\put(145,110){\makebox(0,0)[cc]{$\ldots$}}
\put(145,20){\makebox(0,0)[cc]{$\ldots$}}
\put(80,125){\circle*{2}}
\put(80,5){\circle*{2}}
\put(210,125){\circle*{2}}
\put(210,5){\circle*{2}}
\put(130,125){\circle*{2}}
\put(130,5){\circle*{2}}
\put(160,125){\circle*{2}}
\put(160,5){\circle*{2}}
\put(130,125){\line(0,-1){15}}
\put(130,5){\line(0,1){15}}
\put(160,125){\line(0,-1){15}}
\put(160,5){\line(0,1){15}}
\put(80,125){\line(2,-3){10}}
\put(80,5){\line(2,3){10}}
\put(210,125){\line(-2,-3){10}}
\put(210,5){\line(-2,3){10}}
\put(234,85){\line(1,0){21}}
\put(235,45){\line(1,0){20}}
\put(15,80){\makebox(0,0)[cc]{$y$}}
\put(35,80){\makebox(0,0)[cc]{$w_1$}}
\put(35,50){\makebox(0,0)[cc]{$w_2$}}
\put(15,50){\makebox(0,0)[cc]{$y'$}}
\put(-5,65){\makebox(0,0)[cc]{$z$}}
\put(50,65){\makebox(0,0)[cc]{$u_2$}}
\put(240,65){\makebox(0,0)[cc]{$u_j$}}
\put(50,42){\makebox(0,0)[cc]{$\bar{u}_2$}}
\put(242,40){\makebox(0,0)[cc]{$\bar{u}_{j-1}$}}
\put(50,88){\makebox(0,0)[cc]{$\bar{u}_1$}}
\put(242,90){\makebox(0,0)[cc]{$\bar{u}_j$}}
\put(66,26){\makebox(0,0)[cc]{$u_3$}}
\put(225,25){\makebox(0,0)[cc]{$u_{j-1}$}}
\put(66,104){\makebox(0,0)[cc]{$u_1$}}
\put(225,105){\makebox(0,0)[cc]{$u_{j+1}$}}
\put(92,25){\makebox(0,0)[cc]{$\bar{u}_3$}}
\put(92,105){\makebox(0,0)[cc]{$\bar{u}_{\ell}$}}
\put(110,25){\makebox(0,0)[cc]{$u_4$}}
\put(110,105){\makebox(0,0)[cc]{$u_{\ell}$}}
\put(77,0){\makebox(0,0)[cc]{$w_3$}}
\put(77,130){\makebox(0,0)[cc]{$w_{\ell}$}}
\put(130,0){\makebox(0,0)[cc]{$w_4$}}
\put(130,130){\makebox(0,0)[cc]{$w_{\ell-1}$}}
\put(130,25){\makebox(0,0)[cc]{$\bar{u}_4$}}
\put(130,105){\makebox(0,0)[cc]{$\bar{u}_{\ell-1}$}}
\put(264,85){\makebox(0,0)[cc]{$w_j$}}
\put(265,45){\makebox(0,0)[cc]{$w_{j-1}$}}
\end{picture}
\end{center}
\caption{$B_1$, $P_1$, $P_2$, and $C$.}\label{figs}
\end{figure}
If the set $I'=\{ u_2,w_1,w_2,z\}$ is independent,
then the union of $I'$ 
with a maximal independent set in $G\left[\bar{D}\setminus N_G(I')\right]$ 
is an independent set in $G$ that contains more than $r+s$ vertices,
contradicting (\ref{eext0}).
Hence, by symmetry, and since $P$ is induced, 
we may assume that $z$ is adjacent to either $w_1$ or $u_2$.

If the neighbor of $w_1$ in $D$ does not lie in $X\setminus \{ w_2\}$,
then, using $P_1$, 
and the fact that $B_1$ is distinct from $B_{\ell},B_{\ell-1},\ldots,B_3$,
repeated applications of Claim \ref{claim1} imply in turn
that $w_{\ell}$ is adjacent to $u_1$,
that $w_{\ell-1}$ is adjacent to $u_{\ell}$,
that $w_{\ell-2}$ is adjacent to $u_{\ell-1}$, and so forth up to
that $w_3$ is adjacent to $u_4$.
Now, using the paths $y'w_2\bar{u}_2u_3\bar{u}_3w_3$ and $Q_3$,
Claim \ref{claim1} implies that $w_2$ is adjacent to $u_3$.
Yet now, the paths $yw_1\bar{u}_1u_2\bar{u}_2u_3\bar{u}_3w_3$
and $Q_3$ yield a contradiction to Claim \ref{claim1}.
Hence, by symmetry, 
the neighbors in $D$ of both vertices $w_1$ and $w_2$ lie in $X\setminus \{ w_1,w_2\}$.
This implies that the neighbor of $z$ in $D$ is $u_2$.

If $w_2$ is adjacent to $w_j$ for some $j\in \{ 3,\ldots,\ell\}$,
then, similarly as above, 
repeated applications of Claim \ref{claim1} imply that 
$u_3w_3,\ldots,u_{j-1}w_{j-1}$ as well as 
$u_1w_{\ell},\ldots,u_{j+2}w_{j+1}$ are edges in $G$,
and 
either $yw_1\bar{u}_1u_2\bar{u}_2u_3\ldots \bar{u}_j$
or $yw_1\bar{u}_1u_1\bar{u}_{\ell}u_{\ell}\ldots \bar{u}_j$
together with $Q_j$ yields a contradiction to Claim \ref{claim1}.
Hence, we may assume that $w_2$ is adjacent to $u_j$ 
for some $j\in [\ell]\setminus \{ 2\}$.
Repeated applications of Claim \ref{claim1} imply 
that $u_3w_3,\ldots,u_{j-1}w_{j-1}$ as well as 
$u_1w_{\ell},\ldots,u_{j+1}w_j$ are edges in $G$. 
Now, the paths $yw_1\bar{u}_1u_2\bar{u}_2u_3\ldots \bar{u}_j$
and $Q_j$ yield a contradiction to Claim \ref{claim1},
which completes the proof.
\end{proof}
Claim \ref{claim8} implies that ${\rm diss}(G[X])=2\alpha(G[X])$ for every $X$ in ${\cal X}$ with $|X|>6$. Hence, each $G[X]$ belongs to ${\cal G}$,
and, hence, arises from the disjoint union from copies of $K_3$ and $K_4^*$
by adding edges as specified for ${\cal G}$.

\begin{claim} \label{claim9}
For $X \in \mathcal{X}$ with $|X|=6$,
the subgraph $G[X]$ of $G$ 
is either $K_4^*$,
or arises from the disjoint union of two copies of $K_3$ by adding edges,
or is completely contained in an induced subgraph of $G$ that is isomorphic
to the graph shown in the left of Figure \ref{figb}.
\end{claim}
\begin{proof}[Proof of Claim \ref{claim9}]
Using the notation from the proof of Claim \ref{claim8}, 
let $X=\{ w_1,w_2,u_1, \bar{u}_1, u_2, \bar{u}_2\}$,
where $C:u_1\bar{u}_1u_2\bar{u}_2u_1$.
Similarly as in the proof of Claim \ref{claim8}, 
we obtain induced alternating paths $P_i$ and $Q_i$
between $w_i$ and some bubble $B_i$ for $i\in [2]$.

First, suppose that $u_1$ and $u_2$ are adjacent.
If $w_1$ and $w_2$ are adjacent, then $G[X]$ is a copy of $K_4^*$.
Hence, we may assume that $w_1$ and $w_2$ are not adjacent.
Claim \ref{claim1} implies that $B_1=B_2$, 
which implies that $B_1$ is a triangle $yy'z$,
$P_1$ ends in $y$, and $P_2$ ends in $y'$.
The observation after the definition of $P_i$ in Claim \ref{claim8} implies
that the paths $P_1$ and $P_2$ consist of the edges $w_1y$ and $w_2y'$,
respectively.
If the set $I'=\{ z,w_1,w_2,u_1\}$ is independent,
then the union of $I'$ 
with a maximal independent set in $G\left[\bar{D}\setminus N_G(I')\right]$ 
is an independent set in $G$ that contains more than $r+s$ vertices,
contradicting (\ref{eext0}).
Hence, by symmetry, we may assume that $z$ is adjacent to $w_1$,
and $G[X]$ is completely contained in the induced subgraph $G[X\cup V(B_1)]$ of $G$ 
that is isomorphic to the left graph shown in Figure \ref{figb}.
Hence, we may assume that $u_1$ and $u_2$ are not adjacent.

If $I'=\{ u_1,u_2\}\cup (V(Q_1)\cap D)$ is independent,
then the union of $I'$ 
with a maximal independent set in $G\left[\bar{D}\setminus N_G(I')\right]$ 
is an independent set in $G$ that contains more than $r+s$ vertices,
contradicting (\ref{eext0}).
Hence, by symmetry, we obtain 
that $w_1$ is adjacent to $u_1$,
and 
that $w_2$ is adjacent to $u_2$,
that is,
the graph $G[X]$ arises from the disjoint union of two copies of $K_3$ by adding edges.
This completes the proof of the claim.
\end{proof}
At this point of the proof
we know that $V(G)$ can be partitioned into sets $V_i$ such that 
each induced subgraph $G[V_i]$ is one of the graphs illustrated in Figure \ref{figpart},
where we also illustrate the possible intersections of $V_i$ with $D$ (up to symmetry).

\begin{figure}[H]
\begin{center}
\unitlength 0.6mm 
\linethickness{0.4pt}
\ifx\plotpoint\undefined\newsavebox{\plotpoint}\fi 
\begin{picture}(61,16)(0,0)
\put(0,0){\circle*{2}}
\put(40,0){\circle*{2}}
\put(20,0){\circle*{2}}
\put(60,0){\circle*{2}}
\put(10,15){\circle*{2}}
\put(50,15){\circle*{2}}
\put(10,15){\line(-2,-3){10}}
\put(50,15){\line(-2,-3){10}}
\put(0,0){\line(1,0){20}}
\put(40,0){\line(1,0){20}}
\put(20,0){\line(-2,3){10}}
\put(60,0){\line(-2,3){10}}
\put(0,0){\circle{4}}
\put(40,0){\circle{4}}
\put(20,0){\circle{4}}
\end{picture}\hspace{7mm}
\unitlength 0.6mm 
\linethickness{0.4pt}
\ifx\plotpoint\undefined\newsavebox{\plotpoint}\fi 
\begin{picture}(103,42)(0,0)
\put(1,0){\circle*{2}}
\put(41,0){\circle*{2}}
\put(81,0){\circle*{2}}
\put(1,20){\circle*{2}}
\put(41,20){\circle*{2}}
\put(81,20){\circle*{2}}
\put(1,40){\circle*{2}}
\put(41,40){\circle*{2}}
\put(81,40){\circle*{2}}
\put(21,0){\circle*{2}}
\put(61,0){\circle*{2}}
\put(101,0){\circle*{2}}
\put(21,20){\circle*{2}}
\put(61,20){\circle*{2}}
\put(101,20){\circle*{2}}
\put(21,40){\circle*{2}}
\put(61,40){\circle*{2}}
\put(101,40){\circle*{2}}
\put(1,0){\line(1,0){20}}
\put(41,0){\line(1,0){20}}
\put(81,0){\line(1,0){20}}
\put(1,40){\line(1,0){20}}
\put(41,40){\line(1,0){20}}
\put(81,40){\line(1,0){20}}
\put(1,40){\line(0,-1){40}}
\put(41,40){\line(0,-1){40}}
\put(81,40){\line(0,-1){40}}
\put(1,0){\line(1,1){20}}
\put(41,0){\line(1,1){20}}
\put(81,0){\line(1,1){20}}
\put(21,20){\line(0,-1){20}}
\put(61,20){\line(0,-1){20}}
\put(101,20){\line(0,-1){20}}
\put(21,0){\line(-1,1){20}}
\put(61,0){\line(-1,1){20}}
\put(101,0){\line(-1,1){20}}
\put(21,40){\line(0,-1){20}}
\put(61,40){\line(0,-1){20}}
\put(101,40){\line(0,-1){20}}
\put(1,0){\circle{4}}
\put(61,20){\circle{4}}
\put(21,0){\circle{4}}
\put(61,0){\circle{4}}
\put(101,0){\circle{4}}
\put(1,40){\circle{4}}
\put(41,40){\circle{4}}
\put(81,40){\circle{4}}
\put(101,40){\circle{4}}
\put(81,0){\circle{4}}
\end{picture}\\[7mm]
\unitlength .6mm 
\linethickness{0.4pt}
\ifx\plotpoint\undefined\newsavebox{\plotpoint}\fi 
\begin{picture}(262,39)(0,0)
\put(8,0){\circle*{2}}
\put(64,0){\circle*{2}}
\put(120,0){\circle*{2}}
\put(176,0){\circle*{2}}
\put(232,0){\circle*{2}}
\put(28,0){\circle*{2}}
\put(84,0){\circle*{2}}
\put(140,0){\circle*{2}}
\put(196,0){\circle*{2}}
\put(252,0){\circle*{2}}
\put(8,0){\line(1,0){20}}
\put(64,0){\line(1,0){20}}
\put(120,0){\line(1,0){20}}
\put(176,0){\line(1,0){20}}
\put(232,0){\line(1,0){20}}
\put(0,10){\circle*{2}}
\put(56,10){\circle*{2}}
\put(112,10){\circle*{2}}
\put(168,10){\circle*{2}}
\put(224,10){\circle*{2}}
\put(36,10){\circle*{2}}
\put(92,10){\circle*{2}}
\put(148,10){\circle*{2}}
\put(204,10){\circle*{2}}
\put(260,10){\circle*{2}}
\put(0,22){\circle*{2}}
\put(56,22){\circle*{2}}
\put(112,22){\circle*{2}}
\put(168,22){\circle*{2}}
\put(224,22){\circle*{2}}
\put(36,22){\circle*{2}}
\put(92,22){\circle*{2}}
\put(148,22){\circle*{2}}
\put(204,22){\circle*{2}}
\put(260,22){\circle*{2}}
\put(8,32){\circle*{2}}
\put(64,32){\circle*{2}}
\put(120,32){\circle*{2}}
\put(176,32){\circle*{2}}
\put(232,32){\circle*{2}}
\put(28,32){\circle*{2}}
\put(84,32){\circle*{2}}
\put(140,32){\circle*{2}}
\put(196,32){\circle*{2}}
\put(252,32){\circle*{2}}
\put(18,37){\circle*{2}}
\put(74,37){\circle*{2}}
\put(130,37){\circle*{2}}
\put(186,37){\circle*{2}}
\put(242,37){\circle*{2}}
\put(18,37){\line(-2,-1){10}}
\put(74,37){\line(-2,-1){10}}
\put(130,37){\line(-2,-1){10}}
\put(186,37){\line(-2,-1){10}}
\put(242,37){\line(-2,-1){10}}
\put(8,32){\line(-4,-5){8}}
\put(64,32){\line(-4,-5){8}}
\put(120,32){\line(-4,-5){8}}
\put(176,32){\line(-4,-5){8}}
\put(232,32){\line(-4,-5){8}}
\put(0,22){\line(0,-1){12}}
\put(56,22){\line(0,-1){12}}
\put(112,22){\line(0,-1){12}}
\put(168,22){\line(0,-1){12}}
\put(224,22){\line(0,-1){12}}
\put(0,10){\line(4,-5){8}}
\put(56,10){\line(4,-5){8}}
\put(112,10){\line(4,-5){8}}
\put(168,10){\line(4,-5){8}}
\put(224,10){\line(4,-5){8}}
\put(18,37){\line(2,-1){10}}
\put(74,37){\line(2,-1){10}}
\put(130,37){\line(2,-1){10}}
\put(186,37){\line(2,-1){10}}
\put(242,37){\line(2,-1){10}}
\put(28,32){\line(4,-5){8}}
\put(84,32){\line(4,-5){8}}
\put(140,32){\line(4,-5){8}}
\put(196,32){\line(4,-5){8}}
\put(252,32){\line(4,-5){8}}
\put(36,22){\line(0,-1){12}}
\put(92,22){\line(0,-1){12}}
\put(148,22){\line(0,-1){12}}
\put(204,22){\line(0,-1){12}}
\put(260,22){\line(0,-1){12}}
\put(36,10){\line(-4,-5){8}}
\put(92,10){\line(-4,-5){8}}
\put(148,10){\line(-4,-5){8}}
\put(204,10){\line(-4,-5){8}}
\put(260,10){\line(-4,-5){8}}
\multiput(0,22)(.055944056,-.153846154){143}{\line(0,-1){.153846154}}
\multiput(56,22)(.055944056,-.153846154){143}{\line(0,-1){.153846154}}
\multiput(112,22)(.055944056,-.153846154){143}{\line(0,-1){.153846154}}
\multiput(168,22)(.055944056,-.153846154){143}{\line(0,-1){.153846154}}
\multiput(224,22)(.055944056,-.153846154){143}{\line(0,-1){.153846154}}
\multiput(36,22)(-.055944056,-.153846154){143}{\line(0,-1){.153846154}}
\multiput(92,22)(-.055944056,-.153846154){143}{\line(0,-1){.153846154}}
\multiput(148,22)(-.055944056,-.153846154){143}{\line(0,-1){.153846154}}
\multiput(204,22)(-.055944056,-.153846154){143}{\line(0,-1){.153846154}}
\multiput(260,22)(-.055944056,-.153846154){143}{\line(0,-1){.153846154}}
\multiput(36,10)(-.055944056,.153846154){143}{\line(0,1){.153846154}}
\multiput(92,10)(-.055944056,.153846154){143}{\line(0,1){.153846154}}
\multiput(8,32)(-.055944056,-.153846154){143}{\line(0,-1){.153846154}}
\multiput(64,32)(-.055944056,-.153846154){143}{\line(0,-1){.153846154}}
\multiput(120,32)(.0714285714,-.056122449){392}{\line(1,0){.0714285714}}
\multiput(176,32)(.0714285714,-.056122449){392}{\line(1,0){.0714285714}}
\multiput(232,32)(.0714285714,-.056122449){392}{\line(1,0){.0714285714}}
\multiput(140,32)(-.0714285714,-.056122449){392}{\line(-1,0){.0714285714}}
\multiput(196,32)(-.0714285714,-.056122449){392}{\line(-1,0){.0714285714}}
\multiput(252,32)(-.0714285714,-.056122449){392}{\line(-1,0){.0714285714}}
\put(0,10){\circle{4}}
\put(0,22){\circle{4}}
\put(36,22){\circle{4}}
\put(36,10){\circle{4}}
\put(18,37){\circle{4}}
\put(74,37){\circle{4}}
\put(56,10){\circle{4}}
\put(64,0){\circle{4}}
\put(92,22){\circle{4}}
\put(92,10){\circle{4}}
\put(112,10){\circle{4}}
\put(112,22){\circle{4}}
\put(148,22){\circle{4}}
\put(148,10){\circle{4}}
\put(130,37){\circle{4}}
\put(186,37){\circle{4}}
\put(242,37){\circle{4}}
\put(204,22){\circle{4}}
\put(204,10){\circle{4}}
\put(260,22){\circle{4}}
\put(260,10){\circle{4}}
\put(176,0){\circle{4}}
\put(168,10){\circle{4}}
\put(232,0){\circle{4}}
\put(224,22){\circle{4}}
\end{picture}
\end{center}
\caption{The induced subgraphs $G[V_i]$ and their possible intersection with $D$;
the vertices that belong to $D$ are encircled.}\label{figpart}
\end{figure}
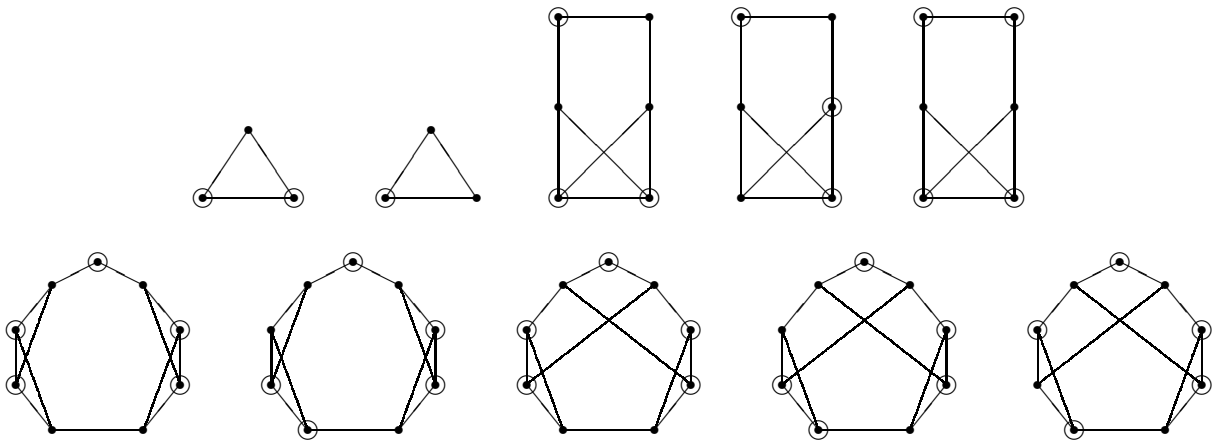
Let $n_3$ be the number of indices $i$ such that $G[V_i]$ is a triangle,
and let $k_3$ be the number of those $G[V_i]$ with $|V_i\cap D|=1$.
Let $n_2$ be the number of indices $i$ such that $G[V_i]$ is a $K_4^*$,
and let $k_2$ be the number of those $G[V_i]$ with $|V_i\cap D|=3$.
Let $n_1$ be the number of indices $i$ such that $G[V_i]$ has order $9$.
Contracting each $G[V_i]$ to a single vertex yields  
a connected subcubic multigraph $H$.
Since 
$18k=n=3n_3+6n_2+9n_1$,
we have 
$6k=n_3+2n_2+3n_1$.
The edges of $H$ that correspond to edges of $G$ between vertices from $D$
that belong to different sets $V_i$ form a matching $M$ in $H$
with $2|M|=k_3+k_2+n_1$ that covers all vertices of degree $1$ in $H$.
Since $10k=|D|=2n_3-k_3+4n_2-k_2+6n_1-n_1$, 
we obtain $k_2+k_3+n_1=2(n_3+2n_2+3n_1)-10k=2k$,
that is, the matching $M$ of $H$ has size exactly $k$.
The multigraph $H$ has exactly $\frac{1}{2}(3n_3+2n_2+n_1)$ edges.
Furthermore, the multigraph $H$ has exactly $2(n_3-k_3)+2(n_2-k_2)$ edges $e$
that correspond to an edge $uv$ of $G$ such that $u\in D$ and $v\in \bar{D}$,
and $u$ and $v$ lie in different $V_i$.
Since 
$2(n_3-k_3)+2(n_2-k_2)+|M|=\frac{1}{2}(3n_3+2n_2+n_1)$,
the multigraph $H$ actually has no edge $e$
that corresponds to an edge $uv$ of $G$ such that $u,v\in \bar{D}$,
and $u$ and $v$ lie in different $V_i$.
This implies that the matching $M$ is induced.
Furthermore, it implies that orienting the edges $e$ of $H$ 
that do not belong to $M$ away from the set $V_i$ 
containing the element of $D$ in $e$
yields an orientation $\overrightarrow{H-M}$ of $H-M$ 
such that every vertex of $H$ that is not incident with an edge in $M$ 
has exactly two outgoing edges.
Altogether, it follows that $G$ belongs to ${\cal G}_k$,
which completes the proof.
\end{proof}
While Theorem \ref{conjecture1} provides deep structural insights
concerning the extremal graphs for Theorem \ref{theorem1},
it remains unclear whether it leads to a polynomial time recognition algorithm 
for these graphs.

\end{document}